\nonstopmode \numberwithin{equation}{section}
\theoremstyle{plain}
\newtheorem{prop}{Proposition}
\newtheorem{conj}{Conjecture}
\theoremstyle{definition}
\newtheorem{exm}{Example}[section]
\newtheorem{cor}{Corollary}[section]
\newtheorem{thm}{Theorem}[section]
\newtheorem{lem}{Lemma}[section]
\newtheorem{prob}{Problem}
\newtheorem{rem}{Remark}[section]
\theoremstyle{plain}
\newtheorem*{thmA}{Theorem A}
\newtheorem*{thmB}{Theorem B}
\newtheorem*{thmC}{Theorem C}
\newtheorem*{lemA}{Lemma A}
\newcounter{minutes}\setcounter{minutes}{\time}
\newcounter{hours}\setcounter{hours}{\time}
\newcounter {own}
\def\theown {\thesection       .\arabic{own}}
\newenvironment{pf}[1][]{%
	\vskip 3mm
	\noindent
	\ifthenelse{\equal{#1}{}}%
	{{\slshape Proof. }}%
	{{\slshape #1.} }%
}%
{\qed\bigskip}
\newcounter{alphabet}
\def\be{\begin{equation}}
	\def\ee{\end{equation}}
\newcommand{\bee}{\begin{enumerate}}
	\newcommand{\eee}{\end{enumerate}}
\newcommand{\blem}{\begin{lem}}
	\newcommand{\elem}{\end{lem}}
\newcommand{\bthm}{\begin{thm}}
	\newcommand{\ethm}{\end{thm}}
\newcommand{\bcor}{\begin{cor}}
	\newcommand{\ecor}{\end{cor}}
\newcommand{\beg}{\begin{examp}}
	\newcommand{\eeg}{\end{examp}}
\newcommand{\begs}{\begin{examples}}
	\newcommand{\eegs}{\end{examples}}
\newcommand{\bdefn}{\begin{defn}}
	\newcommand{\edefn}{\end{defn}}
\newcommand{\bprob}{\begin{prob}}
	\newcommand{\eprob}{\end{prob}}
\newcommand{\bei}{\begin{itemize}}
	\newcommand{\eei}{\end{itemize}}
\newcommand{\bcon}{\begin{conj}}
	\newcommand{\econ}{\end{conj}}
\newcommand{\bcons}{\begin{conjs}}
	\newcommand{\econs}{\end{conjs}}
\newcommand{\bprop}{\begin{prop}}
	\newcommand{\eprop}{\end{prop}}
\newcommand{\br}{\begin{rem}}
	\newcommand{\er}{\end{rem}}
\newcommand{\brs}{\begin{rems}}
	\newcommand{\ers}{\end{rems}}
\newcommand{\bo}{\begin{obser}}
	\newcommand{\eo}{\end{obser}}
\newcommand{\bos}{\begin{obsers}}
	\newcommand{\eos}{\end{obsers}}
\newcommand{\bpf}{\begin{pf}}
	\newcommand{\epf}{\end{pf}}
\newcommand{\ba}{\begin{array}}
	\newcommand{\ea}{\end{array}}
\newcommand{\beq}{\begin{eqnarray}}
	\newcommand{\beqq}{\begin{eqnarray*}}
		\newcommand{\eeq}{\end{eqnarray}}
	\newcommand{\eeqq}{\end{eqnarray*}}
\begin{document}

\title{Schwarzian Norm Estimates for Analytic Functions Associated with Convex Functions}

\author{Molla Basir Ahamed$^*$}
\address{Molla Basir Ahamed, Department of Mathematics, Jadavpur University, Kolkata-700032, West Bengal, India.}
\email{mbahamed.math@jadavpuruniversity.in}

\author{Rajesh Hossain}
\address{Rajesh Hossain, Department of Mathematics, Jadavpur University, Kolkata-700032, West Bengal, India.}
\email{rajesh1998hossain@gmail.com}

\author{Sabir Ahammed}
\address{Sabir Ahammed, Department of Mathematics, Jadavpur University, Kolkata-700032, West Bengal, India.}
\email{sabir.math.rs@jadavpuruniversity.in}

\subjclass[2020]{Primary: 30C45, 30C55}
\keywords{Univalent functions, starlike functions, Convex functions, Close-to-convex functions, Schwarzian norm, Growth and Distortion theorems.}
\def\thefootnote{}
\footnotetext{ {\tiny File:~\jobname.tex,
printed: \number\year-\number\month-\number\day,
          \thehours.\ifnum\theminutes<10{0}\fi\theminutes }
} \makeatletter\def\thefootnote{\@arabic\c@footnote}\makeatother
\begin{abstract} 
		Let $\mathcal{A}$ denote the class of analytic functions $f$ on the unit disc $\mathbb{D}=\{z\in\mathbb{C}:\;|z|<1\}$ normalized by $f(0)=0$ and $f^{\prime}(0)=1$. In the present article, we consider and $\mathcal{F}(c)$ the subclasses of $\mathcal{A}$ are defined by
	\begin{align*}
		\mathcal{F}(c)=\bigg\{f\in\mathcal{A}:\;{\rm Re}\;\bigg(1+\frac{zf^{\prime\prime}(z)}{f^{\prime}(z)}\bigg)>1-\frac{c}{2},\;\;\mbox{for some}\;c\in(0,3]\bigg\},
	\end{align*}
	and derive sharp bounds for the norms of the Schwarzian and pre-Schwarzian derivatives for functions in and $\mathcal{F}(c)$ expressed in terms of their value $f^{\prime\prime}(0)$, in particular, when the quantity is equal to zero. Moreover, we obtain sharp bounds for distortion and growth theorems for functions in the class $\mathcal{F}(c)$.
\end{abstract}
\maketitle
\pagestyle{myheadings}
\markboth{ M. B. Ahamed, Rajesh Hossain and Sabir Ahammed}{Schwarzian Norm Estimates for Analytic Functions Associated with Convex Functions}
%\tableofcontents
\section{\bf Introduction}
Let \( \mathbb{D} = \{z \in \mathbb{C} : |z| < 1\} \) be the unit disk, and define \( \mathcal{H} \) as the class of analytic functions on \( \mathbb{D} \). The subclass \( \mathcal{LU} \) consists of locally univalent functions, \textit{i.e.,} functions \( f \in \mathcal{H} \) with \( f^{\prime}(z) \neq 0 \) for all \( z \in \mathbb{D} \). For functions $f\in\mathcal{LU}$ defined in a simply connected domain ${\Omega}$, the pre-Schwarzian derivative $P_f$ and the Schwarzian derivative $S_f$ are, respectively, defined by
\begin{align*}
	P_f=\frac{f^{\prime\prime}(z)}{f^{\prime}(z)}\;\mbox{and}\;S_f=(P_f)^{\prime}(z)-\frac{1}{2}(P_f)^2(z)=\frac{f^{\prime\prime\prime}(z)}{f^{\prime\prime}(z)}-\frac{3}{2}\left(\frac{f^{\prime\prime}(z)}{f^{\prime}(z)}\right)^2.
\end{align*}
The pre-Schwarzian and Schwarzian norms are defined by
\begin{align*}
	||P_f||_{\Omega}= \sup_{z\in{\Omega}}|P_f|\eta^{-1}_{\Omega}\;\mbox{and}\;||S_f||_{\Omega}= \sup_{z\in{\Omega}}|S_f|\eta^{-2}_{\Omega}
\end{align*}
respectively, where $\eta_{\Omega}$ is the Poincare density. In particular, if ${\Omega}=\mathbb{D}$, then $||S_f||_{\Omega}$ and $||P_f||_{\Omega}$ are denoted by $||S_f||$ and $||P_f||$,  respectively.\vspace{2mm}

In the following, we discuss some properties of Schwarzian derivatives:
\begin{enumerate}
	\item[$\bullet$] If $\varphi$ is a locally univalent analytic function for which the composition $f\circ \varphi$ is defined, then 
	\begin{align*}
		S_{f\circ \varphi}(z)=S_{f}\circ \varphi(z)\left(\varphi^{\prime}(z)\right)+S_{\varphi}(z).
	\end{align*}
	\item[$\bullet$] The Schwarzian derivative is invariant under M\"obius transformation, \emph{i.e.,} $S_{T\circ\varphi}=S_f$ for any M\"obius transformation $T$ of the form 
	\begin{align*}
		T(z)=\frac{az+b}{cz+d},\; ad-bc\neq 0,\; a, b, c, d\in\mathbb{C}.
	\end{align*}
	\item[$\bullet$] It is easy to verify that $S_f(z)=0$ if, and only, if $f$ is a M\"obius transformation. \vspace{1.2mm}
	\item[$\bullet$] There is a classical relation between the Schwarzian derivative and second order linear differential equations. If $S_f=2p$ and $u=\left(f^{\prime}\right)^{-1/2}$, then 
	\begin{align*}
		u^{\prime\prime}+pu=0.
	\end{align*}
	Conversely, if $u_1$, $u_2$ are linearly independent solutions of this D.E. and $f=u_1/u_2$, then $S_f=2p$.  
\end{enumerate}
The pre-Schwarzian and Schwarzian derivatives are key tools in geometric function theory, particularly for characterizing Teichmüller space through embedding models. They also play a crucial role in studying the inner radius of univalency for planar domains and quasiconformal extensions \cite{Lehto-1987,Lehto-JAM-1979}. Their study dates back to Kummer $(1836)$, who introduced the Schwarzian derivative in the context of hypergeometric $PDEs$. Since then, extensive research has explored their connections to univalent functions, leading to several sufficient conditions for univalency.\vspace{2mm}

Let $\mathcal{A}$ denote the subclass of $\mathcal{H}$, a class of analytic functions on the unit disk $\mathbb{D}$,  consisting of functions $f$ with normalized conditions $f(0)=f^{\prime}(z)-1=0$. Thus, any function $f$ in $\mathcal{A}$ has the Taylor series expansion of the form 
\begin{align}\label{Eq-1.3}
	f(z)=z+\sum_{n=2}^{\infty}a_nz^n\;\; \mbox{for all}\;\; z\in\mathbb{D}.
\end{align}
Let $\mathcal{S}$ be the subclass of $\mathcal{A}$ consisting of univalent (that is, one-to-one) functions. A function $f\in \mathcal{A}$ is called starlike (with respect to the origin) if $f(\mathbb{D})$ is starlike with respect to the origin and convex if $f(\mathbb{D})$ is convex. The class of all univalent starlike (resp. convex) functions in $\mathcal{A}$ is denoted by $\mathcal{S}^*$ (resp. $\mathcal{C}$). However, it is well-known that a function $f\in\mathcal{S}^*$ (resp. $f\in\mathcal{C}$) if, and only if, 
\begin{align*}
	{\rm Re}\left(\frac{zf^{\prime}(z)}{f(z)}\right)>0\; \left(\mbox{resp.}\; {\rm Re}\left(1+\frac{zf^{\prime\prime}(z)}{f^{\prime}(z)}\right)>0\right),\; z\in\mathbb{D}.
\end{align*}
The following observations are important for starlike and convex functions.
\begin{enumerate}
	\item[$\bullet$] The characterizations of starlikeness or convexity are sufficient but not necessary for univalency.\vspace{2mm}
	
	\item[$\bullet$] {\bf Nehari's criteria.} Nehari developed univalency involving Schwarzian derivative, where sufficient condition is almost necessary in the sense that scalar terms vary.
\end{enumerate}
The next two results provide necessary and sufficient criteria for a function to be univalent.
\begin{thmA}\emph{(Kraus-Nehari's Theorem) (Necessary condition)}
	Let $f$ be a univalent function. Then $f$ satisfies
	\begin{align*}
		|S_f(z)|\leq\frac{6}{(1-|z|^2)^2}\; \mbox{for}\; z\in\mathbb{D}.
	\end{align*}
	Moreover, the constant $6$ cannot be replaced by a smaller one.
\end{thmA}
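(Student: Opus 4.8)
The plan is to localize the inequality: for an arbitrary point $\zeta\in\mathbb{D}$, transport the estimate at $\zeta$ to an estimate at the origin for a \emph{normalized} univalent function, and there reduce it to a classical sharp coefficient inequality. First I would fix $\zeta\in\mathbb{D}$ and set $\varphi_\zeta(z)=(z+\zeta)/(1+\overline{\zeta}z)$, the disk automorphism with $\varphi_\zeta(0)=\zeta$ and $\varphi_\zeta'(0)=1-|\zeta|^2$. Since $f$ is univalent, $g:=f\circ\varphi_\zeta$ is univalent on $\mathbb{D}$, and because $\varphi_\zeta$ is a Möbius transformation the composition rule for the Schwarzian (together with $S_{\varphi_\zeta}\equiv0$) gives
\begin{align*}
S_g(0)=S_f\big(\varphi_\zeta(0)\big)\big(\varphi_\zeta'(0)\big)^2=S_f(\zeta)\,(1-|\zeta|^2)^2.
\end{align*}
Replacing $g$ by $h(z)=(g(z)-g(0))/g'(0)\in\mathcal{S}$, which is a post-composition of $g$ with an affine (hence Möbius) map, the Möbius invariance of the Schwarzian yields $S_h(0)=S_g(0)=S_f(\zeta)(1-|\zeta|^2)^2$.

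Next I would compute the Schwarzian of $h(z)=z+a_2z^2+a_3z^3+\cdots$ at the origin: expanding $P_h=h''/h'$ and then $S_h=P_h'-\tfrac12P_h^2$ about $z=0$ gives, after a short calculation, $S_h(0)=6(a_3-a_2^2)$. The crucial analytic ingredient is the sharp coefficient estimate $|a_3-a_2^2|\le1$ for $h\in\mathcal{S}$; this is exactly where the constant $6$ comes from, and it is the main obstacle if one insists on a self-contained argument. It follows from Gronwall's area theorem applied to the inversion $F(z)=1/h(1/z)=z-a_2+(a_2^2-a_3)z^{-1}+\cdots$, which is univalent on $\{|z|>1\}$: the area theorem forces $\sum_{n\ge1}n|b_n|^2\le1$ for the Laurent coefficients of $F$, so in particular $|a_2^2-a_3|=|b_1|\le1$. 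Combining the three steps, $|S_f(\zeta)|(1-|\zeta|^2)^2=6|a_3-a_2^2|\le6$, and since $\zeta\in\mathbb{D}$ is arbitrary we obtain $|S_f(z)|\le 6/(1-|z|^2)^2$ on $\mathbb{D}$.

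Finally, for the sharpness of the constant I would test the Koebe function $k(z)=z/(1-z)^2\in\mathcal{S}$. From $\log k'(z)=\log(1+z)-3\log(1-z)$ one gets $P_k(z)=(4+2z)/(1-z^2)$ and hence $S_k(z)=-6/(1-z^2)^2$, so along the radius $z=r\in(0,1)$ we have $|S_k(r)|=6/(1-r^2)^2$, which equals the bound. Therefore $6$ cannot be replaced by any smaller constant, completing the proof.
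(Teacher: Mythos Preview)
Your proof is correct and is in fact the classical argument due to Kraus and Nehari: conjugate by a disk automorphism to reduce to the origin, identify $S_h(0)=6(a_3-a_2^2)$, and invoke the area theorem via $F(z)=1/h(1/z)$ to get $|a_3-a_2^2|\le1$. The sharpness computation with the Koebe function is also correct.

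However, note that the paper does \emph{not} supply its own proof of Theorem~A. It is quoted in the introduction as a classical background result, with attribution to Kraus \cite{Kraus-1932} and Nehari \cite{Nehari-BAMS-1949}, and is used only to motivate the study of Schwarzian norms. So there is no ``paper's proof'' to compare against; your write-up simply fills in the standard argument that the paper takes for granted.
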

\begin{thmB}\emph{(Nehari's Theorem) (Sufficient condition)}
	Let $f$ be a locally univalent function. If $f$ satisfies
	\begin{align*}
		|S_f(z)|\leq\frac{2}{(1-|z|^2)^2}\; \mbox{for}\; z\in\mathbb{D},
	\end{align*}
	then $f$ is univalent in $\mathbb{D}$. Moreover, the constant $2$ cannot be replaced by a larger one.
\end{thmB}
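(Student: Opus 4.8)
\medskip

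\noindent\emph{Proof proposal.}

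The plan is to exploit the classical correspondence, recalled in the bullet list above, between the Schwarzian derivative and the linear ODE $u''+pu=0$, with $p=\tfrac12 S_f$. Since $f$ is locally univalent on the simply connected domain $\mathbb{D}$, one may set $u_2=(f')^{-1/2}$ and $u_1=f\,(f')^{-1/2}$; a direct check shows these are linearly independent solutions of $u''+pu=0$ with constant Wronskian $u_1u_2'-u_1'u_2\equiv -1$, and $f=u_1/u_2$. Hence, if $f$ were not univalent, say $f(a)=f(b)$ with $a\ne b$, then (as $u_2$ never vanishes) the function $u:=u_1-\lambda u_2$ with $\lambda=u_1(a)/u_2(a)$ would be a \emph{nontrivial} solution vanishing at both $a$ and $b$. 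So the theorem reduces to the statement: under $|p(z)|\le (1-|z|^2)^{-2}$, no nontrivial solution of $u''+pu=0$ has two distinct zeros in $\mathbb{D}$.

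Next I would reduce to zeros on a diameter. The quantity $|S_f(z)|(1-|z|^2)^2$ is invariant under $f\mapsto f\circ T$ for $T\in\Aut(\mathbb{D})$, because $S_{f\circ T}=(S_f\circ T)(T')^2$ and $1-|T(z)|^2=|T'(z)|(1-|z|^2)$; correspondingly the class of admissible $p$'s is preserved, and a solution vanishing at $a,b$ is carried to one vanishing at $T^{-1}(a),T^{-1}(b)$. Choosing $T$ to map the hyperbolic geodesic through $a$ and $b$ onto the real diameter, it suffices to rule out a nontrivial solution $u$ of $u''+pu=0$ (for a possibly relabelled $p$ still satisfying the bound) vanishing at two points $x_1<x_2$ of $(-1,1)$.

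The heart of the argument is an energy identity played against a sharp Hardy-type inequality. Multiplying the equation by $\bar u$, integrating over $[x_1,x_2]$ and integrating by parts (the boundary terms vanish since $u(x_1)=u(x_2)=0$) gives $\int_{x_1}^{x_2}|u'|^2\,dx=\int_{x_1}^{x_2}p\,|u|^2\,dx$, hence $\int_{x_1}^{x_2}|u'|^2\,dx\le\int_{x_1}^{x_2}\tfrac{|u|^2}{(1-x^2)^2}\,dx$. On the other hand, I would show that for every absolutely continuous $\phi$ on $[x_1,x_2]$ with $\phi(x_1)=\phi(x_2)=0$ and $\phi\not\equiv 0$ one has the \emph{strict} inequality $\int_{x_1}^{x_2}|\phi'|^2\,dx>\int_{x_1}^{x_2}\tfrac{|\phi|^2}{(1-x^2)^2}\,dx$: the function $U(x)=(1-x^2)^{1/2}$ is a \emph{positive} solution of the extremal equation $U''+(1-x^2)^{-2}U=0$ on $(-1,1)$, so writing $\phi=Uh$ and integrating by parts once more (boundary terms again vanish) collapses the difference to $\int_{x_1}^{x_2}(1-x^2)|h'|^2\,dx$, which is positive unless $h$, hence $\phi$, is identically zero. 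Taking $\phi=u$ contradicts the energy estimate, proving univalence. Finally, sharpness follows from the family $f_\alpha(z)=\exp\!\big(i\alpha\log\tfrac{1+z}{1-z}\big)$, $\alpha>0$: a short computation gives $S_{f_\alpha}(z)=2(1+\alpha^2)(1-z^2)^{-2}$, so $\|S_{f_\alpha}\|=2(1+\alpha^2)$, while $f_\alpha$ is not one-to-one (the map $z\mapsto\log\tfrac{1+z}{1-z}$ sends $\mathbb{D}$ onto a horizontal strip, on whose image under $w\mapsto i\alpha w$ the exponential is infinitely-to-one), so letting $\alpha\to0^+$ shows the constant $2$ cannot be enlarged. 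The one genuine obstacle is that $p$ is complex-valued, so the classical Sturm comparison theorem does not apply directly to pass from the positivity of $(1-x^2)^{1/2}$ to the one-zero conclusion; the energy-identity-plus-Hardy-inequality route is exactly the device that circumvents this, since it uses only the pointwise bound on $|p|$.
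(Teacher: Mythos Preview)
The paper does not supply a proof of Theorem~B at all: it is quoted in the Introduction as a classical background result (Nehari's sufficient condition), with a reference to \cite{Nehari-BAMS-1949}, and is used only to motivate the study of Schwarzian norms. So there is no ``paper's own proof'' to compare your attempt against.

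That said, your outline is essentially the standard Nehari argument and is sound. The reduction of a failure of univalence to a nontrivial solution of $u''+pu=0$ with two zeros, the M\"obius normalisation placing those zeros on the real diameter, and the sharp Hardy-type inequality obtained by the factorisation $\phi=Uh$ with $U(x)=(1-x^2)^{1/2}$ (a positive solution of $U''+(1-x^2)^{-2}U=0$) are exactly the ingredients of the classical proof. One small wording point: from $\int_{x_1}^{x_2}|u'|^2\,dx=\int_{x_1}^{x_2}p\,|u|^2\,dx$ with complex $p$ you should take the real part (or the modulus) before bounding by $\int|p|\,|u|^2\,dx$; the conclusion you state then follows. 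Your sharpness example $f_\alpha(z)=\exp\!\big(i\alpha\log\frac{1+z}{1-z}\big)$ indeed has $S_{f_\alpha}(z)=2(1+\alpha^2)(1-z^2)^{-2}$ and is non-univalent for every $\alpha>0$, so the constant $2$ cannot be increased.
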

This was Nehari's motivation to study Schawarzian derivatives as well as Schawarzian norm. It is well-known that the pre-Schwarzian norm $||Pf||\leq 6$ holds for the univalent analytic function $f$ is defined in $\mathbb{D}$. In $1972$, Becker \cite{Becker-JRAM-1983} used the pre-Schwarzian derivative to obtain the sufficient condition that the function in $\mathbb{D}$ is univalent, in other words, if $||Pf||\leq1$, then the function $f$ is univalent in $\mathbb{D}$. In 1976, Yamashita \cite{Yamashita-MM-1976} proved that $||Pf||$ is finite if, and only if, $f$ is uniformly locally univalent in $\mathbb{D}$, \emph{i.e.}, there exists a constant $\rho$ such that $f$ is univalent on the hyperbolic disk $|(z-a)/(1-\bar{a}z)|<\tanh\rho$ of radius $\rho$ for every $a\in\mathbb{D}$. Sugawa \cite{Sugawa-AUMCDS-1996} studied and established the norm of the pre-Schwarzian derivative of the strongly starlike functions of order $\alpha\; (0<\alpha\leq1)$. Yamashita\cite{Yamashita-HMJ-1999} generalized sugawa's results by a general class named Gelfer-starlike of exponential order $\alpha (\alpha>0)$ and the Gelfer-close-to-convex of exponential order $(\alpha,\beta)$ ($\alpha>0$, $\beta>0$). These Gelfer classes also contain the classical starlike, convex, close-to-convex all of order $\alpha$ ($0\leq\alpha<1$), which are denote by $\mathcal{S^*(\alpha)}$, $\mathcal{C(\alpha)}$, $\mathcal{K(\alpha)}$ respectively, and so on.\vspace{2mm}

Here, we recall that a function $f\in\mathcal{A}$ is called close-to-convex if $f(\mathbb{D})$ in $\mathbb{C}$ is the union of closed half lines with pairwise disjoint interiors. However, in \cite{Okuyama-CVTA-2000}, Okuyama  studied the subclass of $\alpha$-spirallike functions of order ($-\pi/2<\alpha<\pi/2$), and later a general class call $\alpha$-spirallike functions of order $\rho$ $(0\leq\rho<1)$ considered by Aghalary and Orouji \cite{Aghalary-Orouji-COAT-2014}. Recently, Ali and Pal \cite{Ali-Pal-MM-2023} studied the sharp estimate of the pre-Schwarzian norm for the Janowski starlike functions. Other subclasses have also been widely studied, such as meromorphic function exterior of the unit disk \cite{Ponnusamy-Sugawa-JKMS-2008}, subclass of strong starlike function \cite{Ponnusamy-Sahoo-M-2008}, uniformly convex and uniformly starlike function \cite{Kanas-AMC-2009} and bi-univalent function \cite{Rahmatan-Najafzadeh-Ebadian-BIMS-2017}. For the pre-Schwarzian norm estimates of other function forms such as convolution operator and integral operator, we refer to the articles \cite{Choi-Kim-Ponnusamy-Sugawa-JMAP-2005,Kim-Sugawa-PEMS-2006,Parvatham-Ponnusamy-Sahoo-HMJ-2008,Ponnusamy-Sahoo-JMAA-2008,Nehari-BAMS-1949} and references therein. The pioneering work on the bound \( ||Sf|| \leq 6 \) for a univalent function \( f \in \mathcal{A} \) was first introduced by Kraus \cite{Kraus-1932} and later revisited by Nehari \cite{Nehari-BAMS-1949}. In the same paper, Nehari also proved that if $||Sf||\leq2$, then the function $f$ is univalent in $\mathbb{D}$.\vspace{2mm}

The Schwarzian norm plays a significant role in the theory of quasiconformal mappings and Teichm\"uller space (see \cite{Lehto-1987}). A mapping \( f : \hat{\mathbb{C}} \to \hat{\mathbb{C}} \) of the Riemann sphere \( \hat{\mathbb{C}} := \mathbb{C} \cup \{\infty\} \) is said to be a \( k \)-quasiconformal (\( 0 \leq k < 1 \)) mapping if it is a sense-preserving homeomorphism of \( \hat{\mathbb{C}} \) and has locally integrable partial derivatives on \( \mathbb{C} \setminus \{f^{-1}(\infty)\} \), satisfying \( |f_{\bar{z}}| \leq k |f_z| \) almost everywhere.  On the other hand, Teichm\"uller space \( \mathcal{T} \) can be identified with the set of Schwarzian derivatives of analytic and univalent functions on \( \mathbb{D} \) that have quasiconformal extensions to \( \hat{\mathbb{C}} \). It is known that \( \mathcal{T} \) is a bounded domain in the Banach space of analytic functions on \( \mathbb{D} \) with a finite hyperbolic sup-norm (see \cite{Lehto-1987}).  \vspace{2mm}

The Schwarzian derivative and quasiconformal mappings are connected through key results presented below.
\begin{thmC}\cite{Ahlfrors-Weill-PAMS-2012,Kühnau-MN-1971}
	If $f$ extends to a $k$-quasiconformal $(0\leq k<1)$ mapping of the Riemann share $\hat{\mathbb{C}}$, then $||S_f||\leq 6k$. Conversely, if $||S_f||\leq 2k$, then $f$ extends to a $k$-quasiconformal mapping of the Riemann sphere $\hat{\mathbb{C}}$.
\end{thmC}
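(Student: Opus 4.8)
The statement packages two logically separate implications, and I would prove them by quite different routes.

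\emph{Necessity} (a $k$-quasiconformal extension forces $\|S_f\|\le 6k$). This direction is entirely soft: I would deduce it from the Kraus--Nehari bound (Theorem A) by a Schwarz-lemma argument in function space. For a Beltrami coefficient $\mu$ on $\hat{\mathbb C}$ that is supported in $\hat{\mathbb C}\setminus\overline{\mathbb D}$ and has $\|\mu\|_\infty<1$, let $f^\mu$ be the normalized quasiconformal solution of the Beltrami equation with dilatation $\mu$; its restriction to $\mathbb D$ is conformal and univalent, so Theorem A gives $\|S_{f^\mu}\|\le 6$, the Schwarzian being computed on $\mathbb D$. By the Ahlfors--Bers dependence theorem, $\Phi:\mu\mapsto S_{f^\mu}$ is a holomorphic map from the open unit ball of $L^\infty(\hat{\mathbb C}\setminus\overline{\mathbb D})$ into the Banach space $\mathcal B$ of holomorphic functions $\psi$ on $\mathbb D$ with $\|\psi\|=\sup_{\mathbb D}(1-|z|^2)^2|\psi(z)|<\infty$, and it satisfies $\Phi(0)=S_{\mathrm{id}}=0$ together with $\|\Phi(\mu)\|\le 6$ for every admissible $\mu$. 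The Schwarz lemma for holomorphic maps between balls of Banach spaces then yields $\|\Phi(\mu)\|\le 6\|\mu\|_\infty$. Finally, if $f$ extends to a $k$-quasiconformal map $F$ of $\hat{\mathbb C}$, then $F$ is conformal on $\mathbb D$, so $\mu_F$ is supported in $\hat{\mathbb C}\setminus\overline{\mathbb D}$ with $\|\mu_F\|_\infty\le k$; by uniqueness in the measurable Riemann mapping theorem $F=T\circ f^{\mu_F}$ for a M\"obius $T$, hence $S_f=S_{f^{\mu_F}}=\Phi(\mu_F)$ on $\mathbb D$ and $\|S_f\|\le 6k$.

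\emph{Sufficiency} ($\|S_f\|\le 2k$ produces a $k$-quasiconformal extension). Here I would write down the Ahlfors--Weill extension explicitly. Since $\|S_f\|\le 2k<2$, Theorem B shows $f$ is univalent; for $\zeta\in\mathbb D$ let $M_\zeta$ be the unique M\"obius transformation agreeing with $f$ to second order at $\zeta$,
\[
M_\zeta(u)=f(\zeta)+\frac{f'(\zeta)\,(u-\zeta)}{\,1-\tfrac12\,\frac{f''(\zeta)}{f'(\zeta)}\,(u-\zeta)\,},
\]
and put $F(z)=f(z)$ for $|z|\le 1$ and $F(z)=M_{1/\bar z}(z)$ for $|z|>1$. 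One checks, using $\|S_f\|<2$ essentially, that $F$ is well defined on $\hat{\mathbb C}\setminus\partial\mathbb D$, and that it extends continuously across $\partial\mathbb D$ (there $1/\bar z\to z$, so $F\to M_z(z)=f(z)$). Because $\zeta=1/\bar z$ depends antiholomorphically on $z$, one has $F_z=M_\zeta'(z)$ and $F_{\bar z}=-\bar z^{-2}\,\partial_\zeta M_\zeta(z)$, while differentiating the expression $M_\zeta(z)$ with respect to $\zeta$ (holding $z$ fixed) gives the clean identity $\partial_\zeta M_\zeta(z)=\tfrac12(z-\zeta)^2 S_f(\zeta)\,M_\zeta'(z)$. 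Substituting $z-\zeta=(|z|^2-1)/\bar z$ then gives
\[
|\mu_F(z)|=\frac12\bigl(1-|\zeta|^2\bigr)^2\,|S_f(\zeta)|\le\frac12\|S_f\|\le k,\qquad\zeta=\frac1{\bar z},
\]
so $F$ is an orientation-preserving local homeomorphism off $\partial\mathbb D$ whose dilatation is bounded by $k$.

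The two displayed computations are routine once the extension is in place; the genuine obstacle is proving that $F$ is \emph{globally injective}. The pointwise bound $|\mu_F|\le k<1$ (together with $F_z\ne 0$) only makes $F$ a local homeomorphism off $\partial\mathbb D$; one still has to show that the osculating images $M_\zeta(1/\bar\zeta)$ do not overlap, neither in the exterior nor across $\partial\mathbb D$, and it is exactly here that $\|S_f\|<2$ is used in an essential way rather than just for well-definedness. Once one knows $F$ is an injective local homeomorphism of $\hat{\mathbb C}$, a degree argument makes it a homeomorphism, and being ACL with dilatation $\le k$ it is the sought $k$-quasiconformal extension of $f$. I expect this injectivity step to be the main difficulty; by contrast the necessity direction and the dilatation estimate above are both short.
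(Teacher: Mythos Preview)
The paper does not prove Theorem~C at all; it is quoted as a background result with references to Ahlfors--Weill and K\"uhnau, so there is no ``paper's own proof'' to compare against. Your outline is in fact the classical route taken in those very references: the necessity direction via holomorphic dependence on the Beltrami coefficient combined with the Kraus--Nehari bound and the Banach-space Schwarz lemma is precisely K\"uhnau's argument, and the explicit reflection $F(z)=M_{1/\bar z}(z)$ together with the identity $\partial_\zeta M_\zeta(z)=\tfrac12(z-\zeta)^2S_f(\zeta)M_\zeta'(z)$ is exactly the Ahlfors--Weill construction. Your dilatation computation is correct and gives $|\mu_F(z)|=\tfrac12(1-|\zeta|^2)^2|S_f(\zeta)|$ with $\zeta=1/\bar z$.

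The one genuine gap you flag yourself is global injectivity of $F$, and you are right that this is the crux. It can be closed without further hard analysis: set $\mu_t=t\mu_F$ for $t\in[0,1]$ and let $F_t$ be the normalized solution of $\partial_{\bar z}F_t=\mu_t\,\partial_zF_t$; each $F_t$ is a quasiconformal homeomorphism of $\hat{\mathbb C}$ by the measurable Riemann mapping theorem, and $t\mapsto F_t$ is continuous. Since $F_0=\mathrm{id}$ and $F_1$ has the same Beltrami coefficient as your $F$, uniqueness forces $F=T\circ F_1$ for some M\"obius $T$, hence $F$ is a homeomorphism. Alternatively, one observes that $F$ is a local homeomorphism of the compact simply connected sphere into itself which agrees with a homeomorphism near $\partial\mathbb D$, and a monodromy/degree argument finishes. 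Either way, the step is standard once isolated; your outline is otherwise complete and faithful to the original sources.
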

Regarding to the estimates of the Schwarzian norm for the subclasses of univalent functions \emph{i.e.,} of functions $f$ that satisfy:
\begin{align*}
	\bigg|\arg\left(\frac{zf^{\prime}(z)}{f(z)}\right)\bigg|<\alpha\frac{\pi}{2},\; z\in\mathbb{D},
\end{align*}
where $0\leq \alpha<1$. In $1996$, Suita \cite{Suita-JHUED-1996} studied the class $\mathcal{C(\alpha)}$, $0\leq \alpha<1$ and using the integral representation of functions in $\mathcal{C}(\alpha)$ proved that the Schwarzian norm satisfies the sharp inequality 
\begin{align*}
	||S_f||\leq\begin{cases}
		2,\;\;\;\;\;\;\;\;\;\;\;\;\;\;\;\;\; \mbox{if}\; 0\leq \alpha\leq 1/2,\\
		8\alpha(1-\alpha),\;\;\;\;\mbox{if}\; 1/2<\alpha<1.
	\end{cases}
\end{align*}
For a constant $\beta\in (-\pi/2, \pi/2)$, a function $f\in\mathcal{A}$ is called $\beta$-spiral like if $f$ is univalent on $\mathbb{D}$ and for any $z\in\mathbb{D}$, the $\beta$-logarithmic spiral $\{f(z)\exp\left(-e^{i\beta}t\right);\; t\geq 0\}$ is contained in $f(\mathbb{D})$. It is equivalent to the condition that ${\rm Re} \left(e^{-i\beta}zf^{\prime}(z)/f(z)\right)>0$ in $\mathbb{D}$ and we denote by $\mathcal{SP}(\beta)$, the set of all $\beta$-spiral like functions. Okuyama \cite{Okuyama-CVTA-2000} give the best possible estimate of the norm of pre-Schwarzian derivatives for the class $\mathcal{SP}(\beta)$.\vspace{2mm}

A function $f\in\mathcal{A}$ is said to be uniformly convex function if every circular arc (positively oriented) of the form $\{z\in\mathbb{D} : |z-\eta|=r\}$, $\eta\in\mathbb{D}$, $0<r<|\eta|+1$ is mapped by $f$ univalently onto a convex arc. The class of all uniformly convex functions is denoted by $\mathcal{UCV}$. In particular, $\mathcal{UCV}\subset \mathcal{K}$. It is well-known that (see \cite{Goodman-APM-1991}) a function $f\in\mathcal{A}$ is uniformly convex if, and only if, 
\begin{align*}
	{\rm Re}\left(1+\frac{z f^{\prime\prime}(z)}{f^{\prime}(z)}\right)>\bigg|\frac{z f^{\prime\prime}(z)}{f^{\prime}(z)}\bigg|^2\; \mbox{for}\; z\in\mathbb{D}.
\end{align*}
In \cite{Kanas-Sugawa-APM-2011}, Kanas and Sugawa established that the Schwarzian norm satisfies \( ||S_f|| \leq 8/\pi^2 \) for all \( f \in \mathcal{UCV} \), with the bound being sharp.  Recently, Schwarzian norm estimates for other subclasses of univalent functions have been gradually studied by many people, such as concave function class \cite{Bhowmik-Wriths-CM-2012}, Robertson class \cite{Ali-Pal-BDS-2023} and other univalent analytic subclasses [6]. Therefore, by using the pre-Schwarzian  and Schwarzian norms to study the univalence and quasiconformal extension problems of analytic function arouse a new wave of research interest.\vspace{2mm}

Our research builds upon a foundational body of work in geometric function theory. The concept of the Schwarzian derivative with $f^{\prime\prime}(0)=0$ for convex mappings of order was initially introduced by Carrasco and Hern\'andez \cite{Carrasco-Hernández-AMP-2023}. Subsequently, Wang \cite{Wang-Li-Fan-MM-2024} expanded on this by providing pre-Schwarzian and Schwarzian norm estimates for subclasses of univalent functions. Most recently, Ahamed and Hossain \cite{Ahamed-Hossain-2024} further developed these ideas by establishing pre-Schwarzian and Schwarzian norm estimates for the class of Ozaki close-to-convex functions. This progression highlights the ongoing significance and evolution of research in this area.\vspace*{2mm}

%Let $\mathcal{S}^*(\alpha)$ and $\mathcal{C}(\alpha)$ denote respectively, the classes of starlike and convex functions of order $\alpha$ for $0\leq \alpha<1$ in $\mathcal{S}$. It is well-known that a function $f\in\mathcal{A}$ belongs to $\mathcal{S}^*(\alpha)$ if, and only if, ${\rm Re}(zf^{\prime}(z)/f(z))>\alpha$ for $z\in\mathbb{D}$, and $f\in\mathcal{C}(\alpha)$ if, and only if, ${\rm Re}(1+zf^{\prime\prime}(z)/f^{\prime}(z))>\alpha$. Similarly, a function $f\in\mathcal{A}$ belongs to $\mathcal{K}$, the class of close-to-convex functions, if and only if, there exists $g\in\mathcal{S}^*$ such that ${\rm Re}[e^{i\tau}(zf^{\prime}(z))/g(z)]>0$ for $z\in\mathbb{D}$ and $\tau\in (-\pi/2, \pi/2)$. Thus, it is easy to see that  $\mathcal{C}\subset\mathcal{S}^*\subset\mathcal{K}\subset\mathcal{S}$. In particular, when $\tau=0$, then the resulting subclass of the close-to-convex functions is denoted by $\mathcal{K}_0$.\vspace{2mm} 

In \cite{Chuaqui-Duren-Osgood-AASFM-2011}, Chuaqui \emph{et. al.} proved a result by applying the Schwarz-Pick lemma and the fact that the expression $1+z(f^{\prime\prime}/f^{\prime})(z)$ is subordinate to the half-plan mapping $\ell(z)=(1+z)/(1-z)$, which is 
\begin{align}\label{Eq-2.2}
	1+\frac{zf^{\prime\prime}(z)}{f^{\prime}(z)}=\ell(w(z))=\frac{1+w(z)}{1-w(z)}
\end{align}
for some function $w : \mathbb{D}\to\mathbb{D}$ holomorphic and such that $w(0)=0$. \vspace{2mm}

The expression which is defined in \eqref{Eq-2.2} allowed us to obtain other characterizations for the convex functions: 
\begin{align}\label{Eq-22.3}
	f\in\mathcal{C}\; \mbox{if, and only if,}\; {\rm Re}\left(1+\frac{zf^{\prime\prime}(z)}{f^{\prime}(z)} \right)\geq \frac{1}{4}\left( 1-|z|^2\right)\bigg| \frac{f^{\prime\prime}(z)}{f^{\prime}(z)}  \bigg|^2,
\end{align}
and 
\begin{align}\label{Eq-22.4}
	f\in\mathcal{C}\; \mbox{if, and only if,}\; \bigg|\left(1-|z|^2\right)\frac{f^{\prime\prime}(z)}{f^{\prime}(z)} -2\bar{z} \bigg|\leq 2,
\end{align}
for all $z\in\mathbb{D}$.\vspace{2mm}

Let $\mathcal{P}$ denote the class of analytic functions $p$ in the unit disc $\mathbb{D}$ such that $p(0) = 1$ and $\operatorname{Re} p(z) > 0$ in $\mathbb{D}$. The class $\mathcal{P}$ is known as the well-known \textit{Carathéodory} class. Let $\varphi \in \mathcal{P}$ be a univalent function with $\varphi'(0) > 0$ for all $z \in \mathbb{D}$, and suppose that the image domain $\varphi(\mathbb{D})$ is symmetric with respect to the real axis and starlike with respect to $1$. The following unified families of univalent, convex, and starlike functions have been
\begin{align*}
	\mathcal{C}(\phi):=\bigg\{f\in\mathcal{S}:\;1+\frac{zf^{\prime\prime}(z)}{f^{\prime}(z)}\prec\phi(z)\;\;\;\mbox{for}\;\;z\in\mathbb{D}\bigg\}.
\end{align*}
and
\begin{align*}
	\mathcal{S^*}(\phi):=\bigg\{f\in\mathcal{S}:\;\frac{zf^{\prime}(z)}{f(z)}\prec\phi(z)\;\;\;\mbox{for}\;\;z\in\mathbb{D}\bigg\}.
\end{align*}
respectively, introduced and investigated by Ma and Minda \cite{Ma-Minda-1992}. For instance, when $\varphi(z) = \frac{1 + z}{1 - z}$, then $\mathcal{C}(\varphi)$ and $\mathcal{S}^*(\varphi)$ are the families $\mathcal{C}$ and $\mathcal{S}^*$ of convex and starlike functions, respectively. Define the class     $\mathcal{C}\left(1+\frac{cz}{1-z}\right)=:\mathcal{F}(c)$ for some $c\in(0,3]$, \emph{i.e.,} the family $\mathcal{F}(c)$ is defined by
\begin{align*}
	\mathcal{F}(c)=\bigg\{f\in\mathcal{A}:\;{\rm Re}\left(1+\frac{zf^{\prime\prime}(z)}{f^{\prime}(z)}\right)>1-\frac{c}{2}\;\;\;\mbox{for}\;\;z\in\mathbb{D}\bigg\}.
\end{align*}
In terms of subordination, the class $\mathcal{F}(c)$ can be defined as:
\begin{align}\label{Eq-1.5}
	f\in\mathcal{F}(c)\iff{\rm Re  }\left(1+\frac{zf^{\prime\prime}(z)}{f^{\prime}(z)}\right)\prec1+\frac{cz}{1-z}.
\end{align}
The class $\mathcal{F}(c)$ has been considered in the literature for various study. For example, in \cite{Allu-Sharma-BDS-2024}, Allu and Sharma have extensively explored some geometric properties of the function class $\mathcal{F}(c)$. In \cite{Pon-Sha-Wirth-JAMS-2020}, Ponnusamy \emph{et. al.} studied logarithmic coefficient problems for the class $\mathcal{F}(c)$ ans established many significant results. \vspace{2mm}
 
 In this paper, we present several key findings in geometric function theory for the class $\mathcal{F}(c)$ and particularly for the class $\mathcal{F}(2)=\mathcal{C}$ of convex functions. Theorem \ref{Th-2.1} establishes an equivalent relation and a crucial bound essential for determining the radius of concavity, exemplified by a case achieving equality. Theorem \ref{Th-2.2} derives growth and distortion theorems, accompanied by two sharp corollaries. We further investigate a specific class of functions in Theorem \ref{Th-2.3}, determining the best possible bound for the pre-Schwarzian derivative. Theorem \ref{Th-2.4} provides the sharp Schwarzian derivative. Finally, in Theorem \ref{Th-2.5}, we establish an important bound with significant implications for future research.
\section{\bf{Pre-Schwarzian, Schwarzian norm Estimates, and other properties of the functions in the class $\mathcal{F}(c)$}}
In this section, we first give the equivalent characterization of \eqref{Eq-22.3} and \eqref{Eq-22.4} for the class $\mathcal{F}(c)$ and obtain Theorem \ref{Th-2.1}. Next, we present the distortion and growth theorem (Theorem \ref{Th-2.2}) and derive the pre-Schwarzian and Schwarzian norms for functions $f$ in the class  $\mathcal{F}(c)$ in terms of $f^{\prime\prime}(0)$. Our results generalize to the classical class $\mathcal{C}$ of convex functions, as shown through corollaries. 
\begin{thm}\label{Th-2.1} For $c\in(0,3]$, the following are equivalent:
	\begin{enumerate}
		\item[{(i)}] $f\in\mathcal{F}(c)$.\vspace{2mm}
		
		\item[{(ii)}] $\displaystyle
			{\rm Re  }\left(1+\frac{zf^{\prime\prime}(z)}{f^{\prime}(z)}\right)\geq1-\frac{c}{2}+\left(\frac{1-|z|^2}{2c}\right)\bigg|\frac{zf^{\prime\prime}(z)}{f^{\prime}(z)}\bigg|^2$.\vspace{2mm}
		
		\item[{(iii)}] $\displaystyle\bigg|	(1-|z|^2)\left(\frac{f^{\prime\prime}(z)}{f^{\prime}(z)}\right)-c\bar{z}\bigg|\leq c.
		$
	\end{enumerate} The inequalities {(ii)} and {(iii)} both are sharp for the function
	\begin{align}\label{Eq-22.11}
		f_{c}(z)=\frac{(1-z)^{1-c}-1}{c-1}\; \mbox{for}\; z\in\mathbb{D}\; \mbox{with}\; c\in(0,3].
	\end{align}
\end{thm}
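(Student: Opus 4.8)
The plan is to establish the chain of equivalences by translating the subordination definition \eqref{Eq-1.5} into a statement about a Schwarz function and then unwinding that statement algebraically. Starting from (i), write $1 + zf''(z)/f'(z) = 1 + cw(z)/(1-w(z))$ for some holomorphic $w : \mathbb{D} \to \mathbb{D}$ with $w(0)=0$; equivalently $zf''(z)/f'(z) = cw(z)/(1-w(z))$, so that $w(z) = \dfrac{zf''(z)/f'(z)}{c + zf''(z)/f'(z)}$. The condition $|w(z)| < 1$ is exactly (iii) in disguise \emph{at the level of $|z|=$ fixed} once one invokes the Schwarz–Pick lemma: since $w(0)=0$, Schwarz's lemma gives $|w(z)| \le |z|$, but the sharper Schwarz–Pick estimate $\left|\dfrac{w(z)-w(0)}{1-\overline{w(0)}w(z)}\right| \le |z|$ together with $w(0)=0$ is what forces the $(1-|z|^2)$-weighted inequality. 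Concretely, I would set $\omega = zf''/f'$ and show that $|w(z)| \le |z|$ rearranges, after clearing denominators and using $|c + \omega|^2 = c^2 + 2c\,\mathrm{Re}\,\omega + |\omega|^2$, into
\[
\left|(1-|z|^2)\frac{f''(z)}{f'(z)} - c\bar z\right| \le c,
\]
which is (iii). This is the cleanest route: (i) $\Leftrightarrow$ (iii) via Schwarz's lemma applied to $w$.

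For (iii) $\Leftrightarrow$ (ii), I would simply expand the square in (iii). Writing $P = f''(z)/f'(z)$, the inequality (iii) says $(1-|z|^2)^2|P|^2 - 2c(1-|z|^2)\,\mathrm{Re}(\bar z P) + c^2|z|^2 \le c^2$, i.e.
\[
(1-|z|^2)^2|P|^2 - 2c(1-|z|^2)\,\mathrm{Re}(\bar z P) \le c^2(1-|z|^2).
\]
Dividing through by $2c(1-|z|^2)$ (legitimate since $c>0$ and $|z|<1$) and rearranging gives
\[
\mathrm{Re}(\bar z P) + \frac{c}{2} \ge \frac{1-|z|^2}{2c}|P|^2,
\]
and since $\mathrm{Re}(\bar z P) = \mathrm{Re}(zf''(z)/f'(z)) = \mathrm{Re}(1 + zf''/f') - 1$, this is precisely (ii) after noting $|zP| = |z||P|$ so $|P|^2 = |zP|^2/|z|^2$ — one must be slightly careful to reconcile $|P|^2$ versus $|zf''/f'|^2$; in fact it is cleaner to keep everything in terms of $\zeta := zf''(z)/f'(z)$ throughout, since then (iii) reads $|(1-|z|^2)\zeta/z - c\bar z| \le c$, multiply by $|z|$: $|(1-|z|^2)\zeta - c|z|^2| \le c|z|$, square, and the cross term $\mathrm{Re}(\bar\zeta \cdot |z|^2)\cdot(\text{stuff})$ produces $\mathrm{Re}\,\zeta$ directly. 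I expect this expansion to be the only place where minor bookkeeping is needed, but no real obstacle.

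The remaining task is sharpness: exhibiting that $f_c$ in \eqref{Eq-22.11} gives equality in (ii) and (iii). For $f_c(z) = \dfrac{(1-z)^{1-c}-1}{c-1}$ one computes $f_c'(z) = (1-z)^{-c}$ and hence $f_c''(z)/f_c'(z) = c/(1-z)$, so $zf_c''(z)/f_c'(z) = cz/(1-z)$, which realizes the extremal Schwarz function $w(z) = z$. Substituting $f_c''/f_c' = c/(1-z)$ into the left side of (iii) gives $|c(1-|z|^2)/(1-z) - c\bar z| = c\,|(1-|z|^2) - \bar z(1-z)|/|1-z| = c\,|1 - |z|^2 - \bar z + |z|^2|/|1-z| = c\,|1-\bar z|/|1-z| = c$, so equality holds in (iii) for every $z \in \mathbb{D}$; the equality in (ii) then follows from the (ii) $\Leftrightarrow$ (iii) equivalence, or by direct substitution. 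I would present the $f_c$ computation explicitly since it is short and self-contained (taking care of the removable case $c=1$ separately, where $f_c(z) = -\log(1-z)$). The main conceptual point of the whole proof — and the only step requiring an idea rather than computation — is recognizing that the correct tool for (i) $\Leftrightarrow$ (iii) is Schwarz's lemma applied to the auxiliary function $w$ built from $zf''/f'$, exactly mirroring \eqref{Eq-22.3}–\eqref{Eq-22.4} for the classical case $c=2$; everything else is algebraic unwinding.
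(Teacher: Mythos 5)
Your proposal follows essentially the same route as the paper: both convert the subordination in (i) into the Schwarz-function identity $\phi(z)=\dfrac{f''(z)/f'(z)}{zf''(z)/f'(z)+c}$ with $|\phi(z)|\le 1$ (the paper's factorization $\omega(z)=z\phi(z)$ is exactly your Schwarz-lemma step $|w(z)|\le|z|$), and then derive (ii) and (iii) by the same expansion of $|c+\omega|^2$ and completion of the square, merely ordering the implications as (i)$\to$(iii)$\to$(ii) rather than (i)$\to$(ii)$\to$(iii). Your explicit verification that $f_c'(z)=(1-z)^{-c}$, $f_c''/f_c'=c/(1-z)$ yields equality in (iii) for every $z$ is in fact more complete than the paper's sharpness discussion, which only defers to the $c=2$ corollaries, and the $|f''/f'|^2$ versus $|zf''/f'|^2$ bookkeeping discrepancy you flag in (ii) is present in the paper's own statement and proof as well.
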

As a consequence of Theorem \ref{Th-2.1}, we obtain two results for functions in the class $\mathcal{F}(2)$ involving sharp inequalities.
\begin{cor}\label{Cor-2.1}
	If $f\in \mathcal{C} := \mathcal{F}(2)$ be of the form \eqref{Eq-1.3}, then we have
	\begin{align}\label{Eq-22.1}
		\bigg|	(1-|z|^2)\left(\frac{f^{\prime\prime}(z)}{f^{\prime}(z)}\right)-2\bar{z}\bigg|\leq 2.
	\end{align}
	The inequality \eqref{Eq-22.1} is sharp. 
\end{cor}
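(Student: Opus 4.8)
The plan is to deduce Corollary~\ref{Cor-2.1} from Theorem~\ref{Th-2.1} by specializing to $c=2$, and then to establish sharpness by an explicit computation with the half-plane mapping. Since $\mathcal{C}=\mathcal{F}(2)$, the equivalence of {\rm (i)} and {\rm (iii)} in Theorem~\ref{Th-2.1}, read with $c=2$, says precisely that $f\in\mathcal{C}$ if and only if $|(1-|z|^2)(f^{\prime\prime}/f^{\prime})-2\bar{z}|\leq 2$ for all $z\in\mathbb{D}$. Hence every $f\in\mathcal{C}$ of the form \eqref{Eq-1.3} satisfies \eqref{Eq-22.1}; this also recovers the known characterization \eqref{Eq-22.4}. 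If one prefers a self-contained route, it is exactly the $c=2$ instance of the proof of Theorem~\ref{Th-2.1}: write $1+zf^{\prime\prime}/f^{\prime}=(1+w)/(1-w)$ as in \eqref{Eq-2.2}, with $w:\mathbb{D}\to\mathbb{D}$ holomorphic and $w(0)=0$, apply the Schwarz lemma $|w(z)|\leq|z|$, and note that after clearing denominators and squaring this inequality is algebraically equivalent to \eqref{Eq-22.1}.

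It remains to check that the constant $2$ cannot be lowered. Putting $c=2$ in \eqref{Eq-22.11} gives
\[
f_2(z)=\frac{(1-z)^{-1}-1}{2-1}=\frac{z}{1-z},
\]
whose image $f_2(\mathbb{D})=\{w:{\rm Re}\,w>-1/2\}$ is a half-plane, so $f_2\in\mathcal{C}$; indeed $1+zf_2^{\prime\prime}(z)/f_2^{\prime}(z)=(1+z)/(1-z)$ has positive real part on $\mathbb{D}$. From $f_2^{\prime}(z)=(1-z)^{-2}$ and $f_2^{\prime\prime}(z)=2(1-z)^{-3}$ we get $f_2^{\prime\prime}(z)/f_2^{\prime}(z)=2/(1-z)$, and therefore
\[
(1-|z|^2)\frac{f_2^{\prime\prime}(z)}{f_2^{\prime}(z)}-2\bar{z}=2\left(\frac{1-|z|^2}{1-z}-\bar{z}\right)=2\cdot\frac{1-|z|^2-\bar{z}+|z|^2}{1-z}=2\cdot\frac{1-\bar{z}}{1-z}.
\]
Since $|1-\bar{z}|=|1-z|$, the modulus of the right-hand side equals $2$ for every $z\in\mathbb{D}$, so \eqref{Eq-22.1} holds with equality identically along $f_2$. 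Consequently no smaller constant is admissible, and \eqref{Eq-22.1} is sharp.

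There is no real obstacle in this argument: the inequality is a direct specialization of Theorem~\ref{Th-2.1}, and the sharpness amounts to a one-line computation. The only point deserving a moment's care is confirming that $f_2$ genuinely lies in $\mathcal{C}$ (equivalently in $\mathcal{F}(2)$), which is immediate from the identity $1+zf_2^{\prime\prime}/f_2^{\prime}=(1+z)/(1-z)$.
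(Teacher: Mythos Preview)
Your proposal is correct and follows essentially the same approach as the paper: the inequality is obtained as the $c=2$ specialization of Theorem~\ref{Th-2.1}, and sharpness is verified with the extremal function $f_2(z)=z/(1-z)$ from \eqref{Eq-22.11}. The only minor difference is that the paper checks equality along the real segment $z=r$, whereas your identity $(1-|z|^2)f_2''/f_2'-2\bar z = 2(1-\bar z)/(1-z)$ shows equality holds for every $z\in\mathbb{D}$, which is a slightly cleaner observation.
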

\begin{cor}\label{Cor-2.2}
	If $f\in \mathcal{C} := \mathcal{F}(2)$ be of the form \eqref{Eq-1.3}, then we have
	\begin{align}\label{Eq-22.2}
		{\rm Re  }\left(1+\frac{zf^{\prime\prime}(z)}{f^{\prime}(z)}\right)\geq\left(\frac{1-|z|^2}{4}\right)\bigg|\frac{zf^{\prime\prime}(z)}{f^{\prime}(z)}\bigg|^2.
	\end{align}
	The inequality \eqref{Eq-22.2} is sharp. 
\end{cor}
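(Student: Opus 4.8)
The plan is to derive Corollary \ref{Cor-2.2} as the direct specialization of Theorem \ref{Th-2.1}(ii) to the case $c = 2$. Since $\mathcal{C} = \mathcal{F}(2)$ by definition, every $f \in \mathcal{C}$ satisfies the hypotheses of Theorem \ref{Th-2.1} with $c=2$, and so the equivalence (i) $\Leftrightarrow$ (ii) in that theorem applies directly. Substituting $c=2$ into inequality (ii), the additive constant becomes $1 - c/2 = 0$ and the coefficient becomes $1/(2c) = 1/4$, so the bound collapses to
\begin{align*}
	{\rm Re}\left(1+\frac{zf^{\prime\prime}(z)}{f^{\prime}(z)}\right) \geq \left(\frac{1-|z|^2}{4}\right)\bigg|\frac{zf^{\prime\prime}(z)}{f^{\prime}(z)}\bigg|^2,
\end{align*}
which is exactly \eqref{Eq-22.2}. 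No further analytic work is required for the inequality itself; its entire content is carried by the parent theorem.

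For sharpness I would test the extremal function $f_c$ of Theorem \ref{Th-2.1} at $c=2$. Setting $c=2$ in \eqref{Eq-22.11} gives $f_2(z) = \big((1-z)^{-1}-1\big)/1 = z/(1-z)$. A short computation yields $f_2'(z) = (1-z)^{-2}$ and $f_2''(z) = 2(1-z)^{-3}$, so that $zf_2''(z)/f_2'(z) = 2z/(1-z)$ and consequently $1 + zf_2''(z)/f_2'(z) = (1+z)/(1-z)$. Using the standard identity ${\rm Re}\big((1+z)/(1-z)\big) = (1-|z|^2)/|1-z|^2$, the left-hand side of the inequality equals $(1-|z|^2)/|1-z|^2$, while the right-hand side equals
\begin{align*}
	\frac{1-|z|^2}{4}\cdot\frac{4|z|^2}{|1-z|^2} = \frac{(1-|z|^2)\,|z|^2}{|1-z|^2}.
\end{align*}
The ratio of the right-hand side to the left-hand side is precisely $|z|^2$, which tends to $1$ as $|z| \to 1$; hence the constant $1/4$ cannot be enlarged, which establishes sharpness.

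There is essentially no obstacle in proving the inequality, as it is a transparent specialization of the already-established Theorem \ref{Th-2.1}. The only point requiring a moment of care is the interpretation of sharpness: equality is attained only in the boundary-limit sense as $|z| \to 1$ along the extremal function $f_2(z) = z/(1-z)$, rather than at any fixed interior point of $\mathbb{D}$. This is typical for estimates of this type, where the optimal constant is detected by letting the argument approach the unit circle.
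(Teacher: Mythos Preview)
Your derivation of the inequality is exactly the paper's approach: Corollary~\ref{Cor-2.2} is obtained as the specialization $c=2$ of Theorem~\ref{Th-2.1}(ii), and you carry this out correctly.

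For sharpness, both you and the paper use the same extremal function $f_2(z)=z/(1-z)$, but the arguments differ. The paper traces equality back through the proof of Theorem~\ref{Th-2.1}: it observes that for $f_2$ the auxiliary function $\phi$ from \eqref{Eq-2.3} satisfies $|\phi(z)|\equiv 1$, so the chain of inequalities in the proof collapses to equalities, and it asserts pointwise equality in \eqref{Eq-22.2}. You instead compute both sides directly and show that their ratio is $|z|^2$, so the constant $1/4$ is optimal in the limit $|z|\to 1$. Your computation is correct, and in fact your observation that equality in \eqref{Eq-22.2} as literally stated (with $|zf''/f'|^2$ rather than $|f''/f'|^2$) holds only asymptotically is accurate; the paper's claim of pointwise equality is really tied to the version with $|f''/f'|^2$ (compare \eqref{Eq-22.3}), for which the two sides do coincide identically on $\mathbb{D}$. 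Either route legitimately establishes sharpness of the constant.
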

\begin{exm}
	For the sharpness of the inequalities \eqref{Eq-22.1} and \eqref{Eq-22.2}, we consider the function defined in \eqref{Eq-22.11} with $c=2$ as 
	\begin{align*}
		f_2(z)=\frac{1}{(1-z)}-1
	\end{align*}
	A simple computation using \eqref{Eq-22.11} shows that
	\begin{align*}
		1+\frac{zf_2^{\prime\prime}(z)}{f_2^{\prime}(z)}=\frac{1+z}{1-z}.
	\end{align*}
	Moreover, we have 
	\begin{align*}
		{\rm 	Re}\left(1+\frac{zf_2^{\prime\prime}(z)}{f_2^{\prime}(z)}\right)>0
	\end{align*}
	hence, it is clear that $f_2\in\mathcal{F}(2)$.\vspace{2mm}
	
	To show the inequality \eqref{Eq-22.1} of Corollary \ref{Cor-2.1} is sharp, we consider $z=r<1$ and establish that 
	\begin{align*}
		\bigg|	(1-|z|^2)\left(\frac{f_2^{\prime\prime}(z)}{f_2^{\prime}(z)}\right)-2\bar{z}\bigg|=\bigg|	(1-r^2)\left(\frac{2}{1-r}\right)-2r\bigg|=|2-2r+2r|=2.
	\end{align*}
	
	\noindent To show the inequality \eqref{Eq-22.2} in Corollary \ref{Cor-2.2} is sharp, we see from \eqref{Eq-2.3} (Proof of Theorem \ref{Th-2.1}) that 
	\begin{align}\label{Eq-22.44}
		\phi(z)=\frac{\frac{f_2^{\prime\prime}(z)}{f_2^{\prime}(z)}}{\frac{zf_2^{\prime\prime}(z)}{f_2^{\prime}(z)}-1}=1.
	\end{align}
	Thus, it is clear that $|\phi(z)|^2=1$ which further leads to 
	\begin{align*}
		{\rm Re  }\left(1+\frac{zf_2^{\prime\prime}(z)}{f_2^{\prime}(z)}\right)=\left(\frac{1-|z|^2}{4}\right)\bigg|\frac{zf_2^{\prime\prime}(z)}{f_2^{\prime}(z)}\bigg|^2.
	\end{align*}
	
\end{exm}
\begin{proof}[\bf Proof of Theorem \ref{Th-2.1}]
	First, we will prove that $(i)$ is equivalent to $(ii)$. For $\beta>0$, let  $f\in\mathcal{F}(c)$ be of the form \eqref{Eq-1.3}. Then from \eqref{Eq-1.5}, we have
	\begin{align}\label{Eq-4.1}
		1+\frac{zf^{\prime\prime}(z)} {f^{\prime}(z)}\prec1+\frac{cz}{1-z}.
	\end{align}
	\noindent Hence, then there exists an analytic function $\omega: \mathbb{D}\rightarrow\mathbb{D}$ with $\omega(0)=0$ such that
	\begin{align*}
		1+\frac{zf^{\prime\prime}(z)} {f^{\prime}(z)}=1+\frac{c\omega(z)}{1-\omega(z)}.
	\end{align*}
	Let $\omega(z)=z\phi(z)$ for some analytic function $\phi$ that satisfy $\phi(\mathbb{D})\subseteq\mathbb{D}$. Then it follows from \eqref{Eq-4.1} that
	\begin{align*}
		\frac{f^{\prime\prime}(z)}{f^{\prime}(z)}=\frac{c\omega(z)}{z(1-\omega(z))}
	\end{align*}
	which yields that
	\begin{align}\label{Eq-2.3}
		\phi(z)=\frac{\frac{f^{\prime\prime}(z)}{f^{\prime}(z)}}{\frac{zf^{\prime\prime}(z)}{f^{\prime}(z)}+c}.
	\end{align}
	Since $|\phi(z)|^2\leq1$, an easy computation leads to
	\begin{align}\label{Eq-2.4}
		\bigg|\frac{f^{\prime\prime}(z)}{f^{\prime}(z)}\bigg|^2\leq c^2+2c	{\rm Re}\left(\frac{zf^{\prime\prime}(z)}{f^{\prime}(z)}\right)+|z|^2\bigg|\frac{f^{\prime\prime}(z)}{f^{\prime}(z)}\bigg|^2.
	\end{align}
	By factorizing, we obtain
	\begin{align*}
		\nonumber c\left[c+2{\rm Re}\left(\frac{zf^{\prime\prime}(z)}{f^{\prime}(z)}\right)\right]\ge(1-|z|^2)\bigg|\frac{f^{\prime\prime}(z)}{f^{\prime}(z)}\bigg|^2
	\end{align*}
	which implies that
	\begin{align}
		{\rm Re}\left(\frac{zf^{\prime\prime}(z)}{f^{\prime}(z)}\right)\geq-\frac{c}{2}+\left(\frac{1-|z|^2}{2c}\right)\bigg|\frac{f^{\prime\prime}(z)}{f^{\prime}(z)}\bigg|^2.
	\end{align}
	Consequently, we have the desired inequality
	\begin{align*}
		{\rm Re  }\left(1+\frac{zf^{\prime\prime}(z)}{f^{\prime}(z)}\right)\leq1-\frac{c}{2}+\left(\frac{1-|z|^2}{2c}\right)\bigg|\frac{zf^{\prime\prime}(z)}{f^{\prime}(z)}\bigg|^2.
	\end{align*}
	
	\noindent Next, we will prove that $(ii)$ is equivalent to $(iii)$.\vspace{2mm} 
	
	\noindent Multiplying \eqref{Eq-2.4} by $(1-|z|^2)$ both side, we have
	\begin{align*}
		(1-|z|^2)^2\bigg|\frac{f^{\prime\prime}(z)}{f^{\prime}(z)}\bigg|^2\leq c^2(1-|z|^2)+2c(1-|z|^2){\rm Re}\left(\frac{zf^{\prime\prime}(z)}{f^{\prime}(z)}\right)
	\end{align*}
	which implies that 
	\begin{align*}
		(1-|z|^2)^2\bigg|\frac{f^{\prime\prime}(z)}{f^{\prime}(z)}\bigg|^2-2c(1-|z|^2){\rm Re}\left(\frac{zf^{\prime\prime}(z)}{f^{\prime}(z)}\right)+c^2|z|^2\leq c^2.
	\end{align*}
	Hence, the required inequality is established.
	\begin{align*}
		\bigg|	(1-|z|^2)\left(\frac{f^{\prime\prime}(z)}{f^{\prime}(z)}\right)-c\bar{z}\bigg|\leq c.
	\end{align*}
	Considering the function \( f_c \) defined in \eqref{Eq-22.11}, we establish the sharpness of the above two inequalities, as confirmed in Corollary \ref{Cor-2.1} and Corollary \ref{Cor-2.2} for the spacial case \( c= 2 \). This completes the proof.
\end{proof}
In the next result, with the additional condition that $f^{\prime\prime}(0)=0$, $\phi$ satisfies that $\phi(z)=z\psi(z)$. for some analytic function $\psi$ with $|\psi(z)|<1$. We say that $f\in \mathcal{F}^0(c)$ if $f\in\mathcal{F}(c)$ and $f^{\prime\prime}(0)=0$. In this way, we give our results as follows.
\begin{thm}\label{Th-2.2}
	For $c\in(0,3]$, let $f\in \mathcal{F}^0(c)$ be of the form \eqref{Eq-1.3}. Then we have
	\begin{align}
		\frac{1}{(1+|z|^2)^{-\frac{c}{2}}}\leq|f^{\prime}(z)|\leq\frac{1}{(1-|z|^2)^\frac{c}{2}}	
	\end{align}
	and
	\begin{align}
		\int_{0}^{|z|}\frac{1}{(1+\xi^2)^{-\frac{c}{2}}} d|\xi|\leq	|f(z)|\leq\int_{0}^{|z|}\frac{1}{(1-\xi^2)^\frac{c}{2}} d|\xi|.
	\end{align}
	All of these estimates are sharp. Equality holds for the function 
	\begin{align*}
		f_{c, \lambda}(z)=\int_{0}^{|z|}\frac{1}{(1-\lambda \zeta^2)^{c/2}}d|\zeta|
	\end{align*} 
	for some $\lambda\in\mathbb{C}$ with $|\lambda|=1$.
\end{thm}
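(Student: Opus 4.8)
\noindent The plan is to start from the representation of $f''/f'$ obtained in the proof of Theorem~\ref{Th-2.1} and then exploit the extra normalization $f''(0)=0$. Since $f\in\mathcal{F}(c)$, there is a Schwarz function $\omega$ (so $\omega:\mathbb{D}\to\overline{\mathbb{D}}$, $\omega(0)=0$) with $1+zf''(z)/f'(z)=1+c\omega(z)/(1-\omega(z))$, and writing $\omega(z)=z\phi(z)$ one gets $f''(z)/f'(z)=c\phi(z)/(1-z\phi(z))$ with $\phi:\mathbb{D}\to\overline{\mathbb{D}}$. Evaluating at $z=0$ and using $f''(0)=0$ forces $\phi(0)=0$, so by Schwarz's lemma $\phi(z)=z\psi(z)$ with $|\psi|\le1$ on $\mathbb{D}$; consequently
\[
 z\,\frac{f''(z)}{f'(z)}=\frac{c\,z^{2}\psi(z)}{1-z^{2}\psi(z)}=c\,\frac{u}{1-u},\qquad u:=z^{2}\psi(z),\quad |u|\le|z|^{2}.
\]

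\smallskip
\noindent For the distortion statement I would next pin down the exact range of $\operatorname{Re}\bigl(zf''(z)/f'(z)\bigr)$. The Möbius map $u\mapsto u/(1-u)$ carries the closed disc $\{|u|\le\rho\}$ ($0<\rho<1$) onto the closed disc whose real diameter is $[-\rho/(1+\rho),\,\rho/(1-\rho)]$; taking $\rho=|z|^{2}=r^{2}$ this yields the sharp two-sided bound
\[
 -\frac{c\,r^{2}}{1+r^{2}}\ \le\ \operatorname{Re}\!\left(z\,\frac{f''(z)}{f'(z)}\right)\ \le\ \frac{c\,r^{2}}{1-r^{2}}.
\]
Since $\frac{\partial}{\partial r}\log|f'(re^{i\theta})|=\frac1r\operatorname{Re}\bigl(zf''(z)/f'(z)\bigr)$ and $|f'(0)|=1$, integrating this inequality in $r$ from $0$ to $|z|$ gives $-\tfrac{c}{2}\log(1+|z|^{2})\le\log|f'(z)|\le-\tfrac{c}{2}\log(1-|z|^{2})$, which is exactly the asserted distortion estimate.

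\smallskip
\noindent The growth inequalities then follow. For the upper bound, integrate $|f'|$ along the radial segment $[0,z]$. For the lower bound I would use that $f$ is univalent with $f(\mathbb{D})$ starlike about the origin (a known property of $\mathcal{F}(c)$ for $c\in(0,3]$), so $C:=f^{-1}\bigl([0,f(z)]\bigr)$ is a well-defined arc in $\mathbb{D}$ joining $0$ to $z$; then
\[
 |f(z)|=\int_{C}|f'(\zeta)|\,|d\zeta|\ \ge\ \int_{C}\bigl(1+|\zeta|^{2}\bigr)^{-c/2}\,|d\zeta|\ \ge\ \int_{0}^{|z|}\bigl(1+t^{2}\bigr)^{-c/2}\,dt,
\]
the last step because along $C$ the radial coordinate ranges over all of $[0,|z|]$ and $t\mapsto(1+t^{2})^{-c/2}$ is nonnegative and decreasing. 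Finally, sharpness is checked on $f_{c,\lambda}(z)=\int_{0}^{z}(1-\lambda\zeta^{2})^{-c/2}\,d\zeta$: here $f_{c,\lambda}'(z)=(1-\lambda z^{2})^{-c/2}$ and $1+zf_{c,\lambda}''(z)/f_{c,\lambda}'(z)=1+c\lambda z^{2}/(1-\lambda z^{2})$ with $\omega(z)=\lambda z^{2}$ a Schwarz function vanishing to second order, so $f_{c,\lambda}\in\mathcal{F}^{0}(c)$; choosing $\lambda=1$, $z=r>0$ gives equality in the right-hand bounds, and $\lambda=-1$, $z=r>0$ gives equality in the left-hand ones.

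\smallskip
\noindent I expect the main obstacle to be the identification, in the second paragraph, of the \emph{precise} image disc of $\{|u|\le r^{2}\}$ under $u\mapsto u/(1-u)$, and hence the correct two-sided pointwise estimate for $\operatorname{Re}(zf''/f')$: this is what makes the final constants sharp, and any looseness there destroys sharpness. A secondary point requiring care is the reduction of the growth lower bound to a one-dimensional radial integral along the arc $C$, which is a priori non-monotone in $|\zeta|$ and which relies on the monotonicity of the integrand together with the starlikeness of $f(\mathbb{D})$. Everything else is routine integration.
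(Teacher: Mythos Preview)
Your approach is essentially the paper's: apply Schwarz's lemma to $\phi$ (using $\phi(0)=0$) to obtain two-sided bounds on $\operatorname{Re}\bigl(zf''/f'\bigr)$, integrate radially to get the distortion estimate, and then pass to growth by integrating $|f'|$ along a suitable arc. The only stylistic difference in the distortion step is that you read off the real-part bounds geometrically from the image of the disc $\{|u|\le r^{2}\}$ under $u\mapsto u/(1-u)$, whereas the paper reaches the equivalent inequality $\bigl|(1-|z|^{4})\,zf''(z)/f'(z)-c|z|^{4}\bigr|\le c|z|^{2}$ by direct algebraic manipulation of $|\phi(z)|\le|z|$; both yield the same endpoints $-cr^{2}/(1+r^{2})$ and $cr^{2}/(1-r^{2})$.

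One point deserves attention. For the lower growth bound you take $C=f^{-1}\bigl([0,f(z)]\bigr)$ and justify that the segment $[0,f(z)]$ lies in $f(\mathbb{D})$ by asserting that $f(\mathbb{D})$ is starlike, calling this ``a known property of $\mathcal{F}(c)$ for $c\in(0,3]$''. This is clear for $c\in(0,2]$ (the functions are convex of nonnegative order, hence starlike), but for $c\in(2,3]$ the functions are only guaranteed to be close-to-convex, and starlikeness would require a reference or a separate argument. The paper sidesteps the issue in the standard way: it picks $z_{0}$ with $|z_{0}|=|z|$ and $|f(z_{0})|$ minimal on that circle, so that by univalence alone the segment $[0,f(z_{0}))$ lies in the Jordan domain $f(\{|\zeta|<|z|\})$; then $|f(z)|\ge|f(z_{0})|\ge\int_{0}^{|z|}(1+t^{2})^{-c/2}\,dt$. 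Your argument is repaired in exactly this way with no change to the rest of the proof. (Aside: your lower bound $-cr^{2}/(1+r^{2})$ for $\operatorname{Re}(zf''/f')$ is the correct sign; the paper's printed proof drops the minus sign at that step, and the error propagates into the stated lower distortion bound.)
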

As a consequence of Theorem \ref{Th-2.2}, we obtain the following result.
\begin{cor}
	If $f\in \mathcal{C} := \mathcal{F}^0(2)$ be of the form \eqref{Eq-1.3}, then the sharp inequality 
	\begin{align}
		\frac{1}{(1+|z|^2)}\leq|f^{\prime}(z)|\leq\frac{1}{(1-|z|^2)}	
	\end{align}
	and
	\begin{align}
		\int_{0}^{|z|}\frac{1}{(1+\xi^2)} d|\xi|\leq	|f(z)|\leq\int_{0}^{|z|}\frac{1}{(1-\xi^2)} d|\xi|.
	\end{align}
	All of these estimates are sharp. Equality holds for the function \begin{align*}
		f_{2, \lambda}(z)=\int_{0}^{|z|}\frac{1}{(1-\lambda \zeta^2)}d|\zeta|
	\end{align*} for some $\lambda\in\mathbb{C}$ with $|\lambda|=1$.
\end{cor}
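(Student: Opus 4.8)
\medskip
\noindent\textbf{Proof proposal.}
The four inequalities in the corollary are precisely those of Theorem~\ref{Th-2.2} at $c=2$, and $f_{2,\lambda}$ is $f_{c,\lambda}$ at $c=2$; so the plan is to prove Theorem~\ref{Th-2.2} and then specialise. Fix therefore $c\in(0,3]$, $f\in\mathcal{F}^{0}(c)$, and write $r=|z|$.

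The first step is to run the Schwarz-function computation of the proof of Theorem~\ref{Th-2.1}, now using $f''(0)=0$. From \eqref{Eq-1.5} one has $1+zf''(z)/f'(z)=1+c\omega(z)/(1-\omega(z))$ for a Schwarz function $\omega$, and $f''(0)=0$ forces $\omega(z)=z^{2}\psi(z)$ with $\psi\colon\mathbb{D}\to\overline{\mathbb{D}}$ analytic, whence $|\omega(z)|\le r^{2}$ by the Schwarz lemma. Thus $zf''(z)/f'(z)=c\omega(z)/(1-\omega(z))$ lies in the image of $\{|u|\le r^{2}\}$ under the M\"obius map $u\mapsto cu/(1-u)$, which a short computation identifies as the disk of centre $cr^{4}/(1-r^{4})$ and radius $cr^{2}/(1-r^{4})$; hence
\[
-\frac{cr^{2}}{1+r^{2}}\ \le\ \operatorname{Re}\!\left(\frac{zf''(z)}{f'(z)}\right)\ \le\ \frac{cr^{2}}{1-r^{2}} .
\]
Then I would combine this with the identity $r\,\partial_{r}\log|f'(re^{i\theta})|=\operatorname{Re}\bigl(zf''(z)/f'(z)\bigr)$ and integrate in $r$ from $0$ to $|z|$ (using $|f'(0)|=1$) to obtain $(1+|z|^{2})^{-c/2}\le|f'(z)|\le(1-|z|^{2})^{-c/2}$, and the upper growth bound follows by integrating $|f'|$ along the radius.

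For the lower growth bound I would argue geometrically. Functions in $\mathcal{F}^{0}(c)$ are univalent, and for $c\le2$ convex, so $[0,f(z)]\subset f(\mathbb{D})$; pulling this segment back by $f^{-1}$ to an arc $\gamma\subset\mathbb{D}$ from $0$ to $z$,
\[
|f(z)|=\int_{\gamma}|f'(\zeta)|\,|d\zeta|\ \ge\ \int_{\gamma}\frac{|d\zeta|}{(1+|\zeta|^{2})^{c/2}}\ \ge\ \int_{0}^{|z|}\frac{dt}{(1+t^{2})^{c/2}},
\]
the last step being the Banach-indicatrix inequality, valid because $|\zeta|$ runs continuously from $0$ to $|z|$ along $\gamma$. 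Sharpness of all four bounds is verified on $f_{c,\lambda}$: since $f_{c,\lambda}'(z)=(1-\lambda z^{2})^{-c/2}$ one has $1+zf_{c,\lambda}''(z)/f_{c,\lambda}'(z)=1+c\lambda z^{2}/(1-\lambda z^{2})\prec1+cz/(1-z)$ and $f_{c,\lambda}''(0)=0$, so $f_{c,\lambda}\in\mathcal{F}^{0}(c)$; taking $\lambda=1$, $z=r$ saturates the upper estimates and $\lambda=-1$, $z=r$ the lower ones. Putting $c=2$ throughout yields the corollary with extremal $f_{2,\lambda}$.

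The real obstacle is the lower growth bound, specifically the move of pulling $[0,f(z)]$ back inside $\mathbb{D}$. For the corollary itself ($c=2$) this is harmless since $f$ is then convex and $f(\mathbb{D})$ is a convex set containing $f(0)=0$ and $f(z)$. For the whole range $c\in(2,3]$ of Theorem~\ref{Th-2.2}, where convexity fails, one must instead appeal to the known univalence of $\mathcal{F}(c)$ and to whichever geometric property of $f(\mathbb{D})$ still secures $[0,f(z)]\subset f(\mathbb{D})$. By contrast the distortion bounds are essentially routine once one observes that $f''(0)=0$ sharpens $|\omega(z)|\le|z|$ to $|\omega(z)|\le|z|^{2}$.
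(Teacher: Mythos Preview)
Your proof is correct and follows the same route as the paper: both derive the corollary by specializing Theorem~\ref{Th-2.2}, proving the latter via the Schwarz lemma for $\omega$ (your M\"obius-disk description of the range of $zf''/f'$ and the paper's inequality $\bigl|(1-|z|^{4})\,zf''(z)/f'(z)-c|z|^{4}\bigr|\le c|z|^{2}$ are the same statement), then integrating $\partial_{r}\log|f'|$ and handling the upper growth bound along the radius.

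The one genuine difference is in the lower growth bound. You secure $[0,f(z)]\subset f(\mathbb{D})$ via convexity of $f(\mathbb{D})$, which is available for $c\le 2$ and hence suffices for the corollary, but---as you rightly flag---leaves $c\in(2,3]$ open. The paper bypasses this by the standard minimum-modulus argument: pick $z_{0}$ minimising $|f|$ on the circle $|z|=r$; then the segment $[0,f(z_{0})]$ lies in $f(\overline{\mathbb{D}_{r}})$ automatically, because any earlier crossing of the Jordan curve $f(|z|=r)$ would produce a boundary point of smaller modulus. One then pulls back that segment and runs exactly your Banach-indicatrix estimate. This needs only univalence of $f$ (which holds for all $c\in(0,3]$), so your flagged obstacle disappears without appeal to any further geometric property of $f(\mathbb{D})$.
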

\begin{proof}[\bf Proof of Theorem \ref{Th-2.2}]
	Let $f\in\mathcal{F}^0(c)$, and from \eqref{Eq-2.3}, we obtain that $\phi(0)=0$. Then  by using the Schwarz lemma, we get
	\begin{align*}
		\bigg|\frac{\frac{f^{\prime\prime}(z)}{f^{\prime}(z)}}{\frac{zf^{\prime\prime}(z)}{f^{\prime}(z)}+c}\bigg|^2\leq|z|^2
	\end{align*}
	which is equivalent to the inequality
	\begin{align*}
		\bigg|\frac{f^{\prime\prime}(z)}{f^{\prime}(z)}\bigg|^2\leq c^2|z|^2+2c|z|^2{\rm Re}\left(\frac{zf^{\prime\prime}(z)}{f^{\prime}(z)}\right)+|z|^2\bigg|\frac{zf^{\prime\prime}(z)}{f^{\prime}(z)}\bigg|^2.
	\end{align*}
	Thus, we have the estimate
	\begin{align}
		(1-|z|^4)\bigg|\frac{f^{\prime\prime}(z)}{f^{\prime}(z)}\bigg|^2\leq c^2|z|^2+2c|z|^2{\rm Re}\left(\frac{zf^{\prime\prime}(z)}{f^{\prime}(z)}\right).
	\end{align}
	Multiplying both sides the above inequality by $(1-|z|^4)$, we obtain 
	\begin{align*}
		(1-|z|^4)^2&\bigg|\frac{f^{\prime\prime}(z)}{f^{\prime}(z)}\bigg|^2-2c|z|^2(1-|z|^4){\rm Re}\left(\frac{zf^{\prime\prime}(z)}{f^{\prime}(z)}\right)\\&\leq c^2|z|^2(1-|z|^4).
	\end{align*}
	Adding \( \left(c|z|^2 \bar{z} \right)^2 \) to both sides of the above inequality leads to
	\begin{align*}
		(1-|z|^4)^2&\bigg|\frac{f^{\prime\prime}(z)}{f^{\prime}(z)}\bigg|^2-2c|z|^2(1-|z|^4){\rm Re}\left(\frac{zf^{\prime\prime}(z)}{f^{\prime}(z)}\right)+c^2|z|^4|\bar{z}|^2\\&\leq c^2|z|^2(1-|z|^4)+c^2|z|^4|\bar{z}|^2\nonumber.
	\end{align*}
	Multiplying both side by $|z|$, then by simple calculation, we obtain
	\begin{align*}
		\bigg|(1-|z|^4)\frac{zf^{\prime\prime}(z)}{f^{\prime}(z)}-c|z|^4\bigg|\leq c|z|^2
	\end{align*}
	which implies that
	\begin{align*}
		\frac{c|z|^2}{1+|z|^2}\leq{\rm Re}\left(\frac{zf^{\prime\prime}(z)}{f^{\prime}(z)}\right)\leq\frac{c|z|^2}{1-|z|^2}.
	\end{align*}
	Let $z=re^{i\theta}$. Then it is easy to see that
	\begin{align*}
		\frac{c r}{1+r^2}\leq\dfrac{\partial}{\partial r}\left(\log|f^{\prime}(re^{i\theta})|\right)\leq\frac{c r}{1-r^2}.
	\end{align*}
	Integrating the above estimate w.r.t. $r$, we obtain
	\begin{align*}
		\frac{1}{(1+|z|^2)^{-\frac{c}{2}}}\leq|f^{\prime}(z)|\leq\frac{1}{(1-|z|^2)^\frac{c}{2}}.
	\end{align*}
	Next, the growth part of the theorem follows from the upper bound
	\begin{align*}
		|f^{\prime}(re^{i\theta})|=\bigg|\int_{0}^{r}f^{\prime}(re^{i\theta})e^{i\theta} dt\bigg|\leq\int_{0}^{r}|f^{\prime}(re^{i\theta})| dt\leq\int_{0}^{r}\frac{1}{(1-t^2)^\frac{c}{2}} dt
	\end{align*}
	which implies that
	\begin{align*}
		|f(z)|\leq\int_{0}^{|z|}\frac{1}{(1-\xi^2)^\frac{c}{2}} d|\xi|
	\end{align*}
	for all $z\in\mathbb{D}$.\vspace{2mm} 
	
	It is well-known that if $f(z_0)$ is a point of minimum modulus on the image of the circle $|z|=r$ and $\gamma=f^{-1}(\Gamma)$, where $\Gamma$ is the line segment from $0$ to $f(z_0)$, then 
	\begin{align*}
		|f(z)|\geq	|f(z_0)|\geq\int_{0}^{|z|}\frac{1}{(1+\xi^2)^\frac{c}{2}} d|\xi|.
	\end{align*}
	Thus all the desired inequalities are established. For the function \( f_{c, \lambda}(z) \) in the main result, the sharpness part can be shown easily, hence we omit the details.
\end{proof}
We will find the sharp bound of the pre-Schwarzian and Schwarzian norm for the function $f$ in the class $\mathcal{F}(c)$, under the assumption  that $f^{\prime\prime}(0)=0$. The following lemma will paly a key role to prove the result.
\begin{lemA}\cite{Carrasco-Hernández-AMP-2023}\label{LemA}
	If $\phi(z):\mathbb{D}\rightarrow\mathbb{D}$ be analytic function, then 
	\begin{align*}
		\frac{|\phi(z)|^2}{1-|\phi(z)|^2}\leq\frac{(\phi(0)+|z|)^2}{(1-|\phi(0)|)^2(1-|z|^2)|)}
	\end{align*}
\end{lemA}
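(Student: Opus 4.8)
The plan is to obtain Lemma A from the invariant Schwarz lemma together with an elementary monotonicity argument. Write $a=|\phi(0)|$ and $r=|z|$, so $a,r\in[0,1)$. First I would compose $\phi$ with the disc automorphism $w\mapsto (w-\phi(0))/(1-\overline{\phi(0)}\,w)$; the resulting function maps $\mathbb{D}$ into $\mathbb{D}$ and vanishes at the origin, so the classical Schwarz lemma applies, and solving back for $\phi(z)$ (that is, maximising the modulus of the inverse M\"obius transformation $w\mapsto(w+\phi(0))/(1+\overline{\phi(0)}\,w)$ over the closed disc $|w|\le r$, the extremum occurring at the boundary point aligned with $\phi(0)$) yields the Schwarz--Pick bound
\begin{align*}
	|\phi(z)|\le\frac{a+r}{1+ar}=:M,
\end{align*}
where $M<1$ because $1+ar-(a+r)=(1-a)(1-r)>0$.

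Next, since the real function $t\mapsto t^2/(1-t^2)$ is strictly increasing on $[0,1)$, substituting the above bound gives $|\phi(z)|^2/(1-|\phi(z)|^2)\le M^2/(1-M^2)$. The only computation needed is the identity
\begin{align*}
	(1+ar)^2-(a+r)^2=(1-a^2)(1-r^2),
\end{align*}
from which $1-M^2=(1-a^2)(1-r^2)/(1+ar)^2$ and hence
\begin{align*}
	\frac{M^2}{1-M^2}=\frac{(a+r)^2}{(1-a^2)(1-r^2)}=\frac{(a+r)^2}{(1-a)(1+a)(1-r^2)}.
\end{align*}
Finally, since $0\le a<1$ we have $1+a\ge 1-a>0$, so $(1-a)(1+a)\ge(1-a)^2$, and therefore the last expression is at most $(a+r)^2/\big[(1-a)^2(1-r^2)\big]$, which is exactly the claimed bound $\big(|\phi(0)|+|z|\big)^2\big/\big[(1-|\phi(0)|)^2(1-|z|^2)\big]$.

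The argument is essentially routine, and I expect the only point requiring care to be the passage to the Schwarz--Pick bound in the first step: one must verify that the maximum of $|w+\phi(0)|/|1+\overline{\phi(0)}\,w|$ over $|w|=r$ equals $(a+r)/(1+ar)$. Writing $w=re^{i\theta}$ and reducing the squared modulus to a M\"obius function of $\cos\theta$ whose derivative has the sign of $(1-a^2)(1-r^2)$ makes this transparent; this is the standard computation behind the Schwarz--Pick lemma and may simply be quoted. Everything after that is the algebraic identity displayed above and the monotonicity of a single real function, so no further obstacle arises.
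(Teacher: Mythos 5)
Your proof is correct. The paper itself gives no proof of Lemma A (it is imported from the cited reference), and your argument --- Schwarz--Pick bound $|\phi(z)|\le(a+r)/(1+ar)$, monotonicity of $t\mapsto t^2/(1-t^2)$, and the identity $(1+ar)^2-(a+r)^2=(1-a^2)(1-r^2)$ --- is exactly the standard derivation used in that source. Note that you in fact establish the sharper bound with denominator $(1-|\phi(0)|^2)(1-|z|^2)$, which is the form actually needed later (e.g.\ to get $|\phi(z)|^2/(1-|\phi(z)|^2)\le|z|^2/(1-|z|^2)$ when $\phi(0)=0$), and the stated version follows from it via $(1-|\phi(0)|^2)\ge(1-|\phi(0)|)^2$ as you observe.
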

Our result is sharp bound of the pre-Schwarzian norm for $f\in\mathcal{F}^0(c)$.
\begin{thm}\label{Th-2.3}
	For $c\in(0,3]$, let $f\in\mathcal{F}^0(c)$ be of the form \eqref{Eq-1.3}, then the pre-schwarzian norm satisfies the inequality
	\begin{align*}
		||Pf||\leq c.
	\end{align*}
	The inequality is sharp.
\end{thm}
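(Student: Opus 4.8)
The plan is to bound $|P_f(z)| = |f''(z)/f'(z)|$ pointwise and then multiply by the reciprocal of the hyperbolic density $\eta_{\mathbb{D}}(z)^{-1} = 1-|z|^2$ to obtain the norm. The starting point is the representation \eqref{Eq-2.3}, which gives
\begin{align*}
	\frac{f''(z)}{f'(z)} = \frac{c\,\phi(z)}{1 - z\phi(z)},
\end{align*}
where $\phi:\mathbb{D}\to\mathbb{D}$ is analytic; moreover, since $f\in\mathcal{F}^0(c)$ means $f''(0)=0$, equation \eqref{Eq-2.3} forces $\phi(0)=0$, so by the Schwarz lemma $|\phi(z)|\le|z|$ on $\mathbb{D}$.

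First I would estimate the modulus directly:
\begin{align*}
	\left|\frac{f''(z)}{f'(z)}\right| = \frac{c\,|\phi(z)|}{|1-z\phi(z)|} \le \frac{c\,|\phi(z)|}{1 - |z||\phi(z)|} \le \frac{c\,|z|}{1-|z|^2},
\end{align*}
where the last inequality uses $|\phi(z)|\le|z|$ together with the fact that $t\mapsto t/(1-|z|t)$ is increasing on $[0,1)$ for fixed $|z|<1$. Multiplying by $1-|z|^2$ yields
\begin{align*}
	\big(1-|z|^2\big)\left|\frac{f''(z)}{f'(z)}\right| \le c\,|z| \le c,
\end{align*}
and taking the supremum over $z\in\mathbb{D}$ gives $\|P_f\|\le c$. (Alternatively, one can run the argument through Lemma~\ref{LemA} with $\phi(0)=0$, which directly bounds $|\phi(z)|^2/(1-|\phi(z)|^2)$ and hence $|\phi(z)|/\sqrt{1-|\phi(z)|^2}$; but the elementary Schwarz-lemma estimate above is cleaner here since we only need $|\phi(z)|\le|z|$.)

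For sharpness I would exhibit an extremal function where $\phi(z) = z$ identically, i.e. $\omega(z) = z^2$, which corresponds (up to the rotation $\lambda$) to the function $f_{c,\lambda}$ already appearing in Theorem~\ref{Th-2.2} with $\lambda = 1$: indeed $f_c$ defined by $f_c'(z) = (1-z^2)^{-c/2}$ satisfies $f_c''(z)/f_c'(z) = cz/(1-z^2)$, so that $(1-|z|^2)|f_c''(z)/f_c'(z)|$ restricted to $z = r\in(0,1)$ equals $cr(1-r^2)/(1-r^2)\cdot\frac{1-r^2}{1-r^2}$... more carefully, $(1-r^2)\cdot cr/(1-r^2) = cr \to c$ as $r\to 1^-$, which shows the bound $c$ is approached and hence cannot be improved. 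The only mild subtlety — and the single step needing a word of care — is verifying that $f''(0)=0$ holds for this extremal (it does, since $f_c'(z) = 1 + \frac{c}{2}z^2 + \cdots$ has no linear term), so that the extremal genuinely lies in $\mathcal{F}^0(c)$ rather than merely in $\mathcal{F}(c)$. There is no serious obstacle; the main thing is to record the chain of inequalities and the correct extremal cleanly.
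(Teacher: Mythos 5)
Your proposal is correct and follows essentially the same route as the paper: both write $f''/f' = c\phi/(1-z\phi)$ from \eqref{Eq-2.3}, use $\phi(0)=0$ and the Schwarz lemma (the paper phrases this as $\phi(z)=z\xi(z)$ with $|\xi|<1$) to get $(1-|z|^2)|f''/f'|\le c|z|\le c$, and verify sharpness with the same extremal $f_c(z)=\int_0^z(1-\xi^2)^{-c/2}\,d\xi$. Your explicit check that $f_c''(0)=0$, so the extremal lies in $\mathcal{F}^0(c)$, is a small but welcome addition the paper omits.
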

\begin{proof}[\bf Proof of Theorem \ref{Th-2.3}]
	Since $\phi(z)=z\xi(z)$, with $|\xi(z)|<1$, then in \eqref{Eq-2.3} we obtain
	\begin{align*}
		\sup_{z\in\mathbb{D}}(1-|z|^2)\bigg|\frac{f^{\prime\prime}(z)}{f^{\prime}(z)}\bigg|&\leq\sup_{z\in\mathbb{D}}(1-|z|^2)\frac{c|z\xi(z)|}{1-|z|^2|\xi(z)|}\\&\leq c \sup_{0\leq r\leq 1}\frac{r(1-r^2)}{(1-r^2)}\\&=c.
	\end{align*}
	Thus, we have the required inequality $||Pf||\leq c$. \vspace{2mm}
	
	To show that the inequality is sharp, we consider the function $f_{c}^*$ given by
	\begin{align*}
		f_{c}^*(z)=	\int_{0}^{z}\frac{1}{(1-\xi^2)^\frac{c}{2}} d\xi.
	\end{align*}
	It can be easily shown that $||Pf_c^*||=c.$ This completes the proof.
\end{proof}
For the class $\mathcal{C}$ of convex functions, we obtain the following sharp result, derived from Theorem \ref{Th-2.3}.
\begin{cor}\label{Cor-2.4}
	Let $f\in\mathcal{C}:=\mathcal{F}^0(2)$ be of the form \eqref{Eq-1.3}, then the pre-schwarzian norm satisfies the inequality
	\begin{align*}
		||Pf||\leq 2.
	\end{align*}
	The inequality is sharp.
\end{cor}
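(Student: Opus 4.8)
The plan is to read off the corollary as the case $c=2$ of Theorem \ref{Th-2.3}. Since the class $\mathcal{F}(2)$ coincides with $\mathcal{C}$ by the defining subordination \eqref{Eq-1.5} (the Möbius image $1+\tfrac{2z}{1-z}=\tfrac{1+z}{1-z}$ is exactly the right half-plane), the superscript-zero class $\mathcal{F}^0(2)$ is precisely the set of convex functions $f$ with $f''(0)=0$. Hence for any such $f$, specializing $c=2$ in Theorem \ref{Th-2.3} gives immediately $\|Pf\|\le 2$. There is essentially no obstacle here; the only thing to be careful about is the bookkeeping identification $\mathcal{C}=\mathcal{F}(2)$, which the excerpt has already recorded.

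For sharpness I would exhibit the extremal function used in the proof of Theorem \ref{Th-2.3} with $c=2$, namely
\[
f_2^*(z)=\int_0^z\frac{1}{1-\xi^2}\,d\xi=\frac12\log\frac{1+z}{1-z}.
\]
First one checks $f_2^*\in\mathcal{F}^0(2)$: from $f_2^*{}'(z)=1/(1-z^2)=1+z^2+\cdots$ we get $f_2^*{}''(0)=0$, and
\[
1+\frac{zf_2^*{}''(z)}{f_2^*{}'(z)}=1+\frac{2z^2}{1-z^2}=\frac{1+z^2}{1-z^2},
\]
which maps $\mathbb{D}$ into the right half-plane since $z\mapsto z^2$ maps $\mathbb{D}$ into $\mathbb{D}$ and $w\mapsto(1+w)/(1-w)$ carries $\mathbb{D}$ onto the right half-plane; thus $f_2^*$ is convex.

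Finally I would compute the pre-Schwarzian norm of $f_2^*$ directly. We have $\dfrac{f_2^*{}''(z)}{f_2^*{}'(z)}=\dfrac{2z}{1-z^2}$, so
\[
(1-|z|^2)\left|\frac{f_2^*{}''(z)}{f_2^*{}'(z)}\right|=\frac{2|z|(1-|z|^2)}{|1-z^2|}.
\]
Evaluating along the real segment $z=r\in(0,1)$ yields $\dfrac{2r(1-r^2)}{1-r^2}=2r\to 2$ as $r\to 1^-$, while the reverse bound $\|Pf_2^*\|\le 2$ holds by the first part. Hence $\|Pf_2^*\|=2$, proving the bound is best possible. The argument is entirely routine; the main (minor) point worth stating explicitly is simply verifying $f_2^*\in\mathcal{F}^0(2)$, since the inequality itself is inherited verbatim from Theorem \ref{Th-2.3}.
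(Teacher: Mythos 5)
Your proposal is correct and follows the same route as the paper: specialize Theorem \ref{Th-2.3} to $c=2$ and verify sharpness with the extremal function $f_2^*(z)=\int_0^z(1-\xi^2)^{-1}\,d\xi$. Your sharpness computation is in fact slightly more careful than the paper's (which misprints $Pf_2$ as $2/(1-z^2)$ instead of $2z/(1-z^2)$), and your explicit check that $f_2^*\in\mathcal{F}^0(2)$ is a worthwhile addition.
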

\subsection*{Sharpness of Corollary \ref{Cor-2.4}}
For $c=2$, it follows from  that
\begin{align*}
	\frac{f_{2}^{\prime\prime}}{f_{2}^{\prime}}(z)=\frac{2z}{(1-z^2)}\;\;\mbox{and}\;\;Pf_2=\frac{2}{1-z^2}.
\end{align*}
A simple computation thus yields that
\begin{align*}
	||Pf_2||=\sup_{z\in\mathbb{D}}\left(1-z^2\right)|Pf_2|=\sup_{z\in\mathbb{D}}\left(1-|z|^2\right)\frac{2}{1-|z|^2}=2
\end{align*}
and we see the constant $2$ is best possible.\vspace{2mm}

Using the Schwarz lemma, we obtain a sharp bound for the Schwarzian derivative norm when \( f \in \mathcal{F}(c) \).
\begin{thm}\label{Th-2.4}
	For $c\in(0,3]$, let $f\in\mathcal{F}^0(c)$ be of the form \eqref{Eq-1.3}. Then the Schwarzian norm satisfies the inequality
	\begin{align*}
		||Sf||=\sup_{z\in\mathbb{D}}(1-|z|^2)^2|Sf(z)|\leq\frac{c(4-c)}{2}.
	\end{align*}
	The inequality is sharp. 
\end{thm}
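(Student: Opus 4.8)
The plan is to follow the same template used in Theorem \ref{Th-2.3}, but applied to the Schwarzian rather than the pre-Schwarzian. Recall that for $f \in \mathcal{F}^0(c)$ we have from \eqref{Eq-2.3} that
\begin{align*}
	\frac{f^{\prime\prime}(z)}{f^{\prime}(z)} = \frac{c\,\omega(z)}{z(1-\omega(z))}, \qquad \omega(z) = z\phi(z), \quad \phi(z) = z\psi(z),
\end{align*}
with $|\psi(z)| < 1$ on $\mathbb{D}$, so $\omega(z) = z^2\psi(z)$ vanishes to second order at the origin. First I would compute $S_f$ directly in terms of $\omega$ and its derivatives. Writing $P_f = c\,\omega / (z(1-\omega))$, differentiate to get $P_f^{\prime}$, then assemble $S_f = P_f^{\prime} - \tfrac12 P_f^2$. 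After simplification the answer should take the form
\begin{align*}
	S_f(z) = c\left(\frac{\omega^{\prime\prime}(z)}{?} + \cdots\right) - \frac{c^2}{2}\left(\frac{\omega(z)}{z(1-\omega(z))}\right)^2 + \text{lower-order terms},
\end{align*}
and the key point is that because $\omega$ vanishes to order two, the apparent poles at $z=0$ cancel; I would keep track of the expansion $\omega(z) = z^2\psi(z)$ throughout so the $1/z$ and $1/z^2$ terms visibly disappear.

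Next I would bound $(1-|z|^2)^2 |S_f(z)|$. The natural tools are the Schwarz–Pick inequality $|\omega^{\prime}(z)| \le (1-|\omega(z)|^2)/(1-|z|^2)$ together with $|\omega(z)| \le |z|^2$ (from $\omega = z^2\psi$, $|\psi|<1$), and Lemma \ref{LemA} (with $\phi(0)=0$, giving $|\phi(z)|^2/(1-|\phi(z)|^2) \le |z|^2/(1-|z|^2)$, hence a clean bound on $|\omega|/(1-|\omega|)$ type quantities). Substituting these and reducing to a one-variable problem in $r = |z| \in [0,1)$, I expect the supremum of the resulting expression to be a quadratic in $c$ that evaluates to $c(4-c)/2$; one plausible route is that the dominant contribution is of the shape $\tfrac{c}{1}\cdot(\text{something} \le 2) - \tfrac{c^2}{2}\cdot(\text{something})$, and optimizing the radial variable produces exactly $2c - c^2/2$. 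The restriction $c \in (0,3]$ should enter in ensuring that the extremal radius lies in $[0,1)$ and that no competing term overtakes; I would check the endpoint behaviour carefully.

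Finally, for sharpness I would test the function $f_{c}^{*}(z) = \int_0^z (1-\xi^2)^{-c/2}\,d\xi$ already used in Theorem \ref{Th-2.3}, for which $P_{f_c^*}(z) = cz/(1-z^2)$. A direct computation of $S_{f_c^*} = P^{\prime} - \tfrac12 P^2$ gives a rational function whose weighted sup-norm over $\mathbb{D}$ should equal $c(4-c)/2$, attained along a radius; I would exhibit the value at the optimizing point. The main obstacle I anticipate is the bookkeeping in the bounding step: after substituting the Schwarz–Pick and Lemma \ref{LemA} estimates there will be several competing terms (coming from $\omega^{\prime\prime}$, from cross terms $\omega\,\omega^{\prime}$, and from the $P_f^2$ piece), and showing that their combined weighted supremum is \emph{exactly} $c(4-c)/2$ — not merely bounded by something larger — will require either a careful term-by-term pairing or an auxiliary one-variable maximization lemma. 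Matching the lower bound from $f_c^*$ to this upper bound is what pins down the constant.
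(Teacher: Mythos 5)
Your overall strategy coincides with the paper's: parametrize $1+zf''/f'$ by the subordination, compute $S_f$ in closed form, estimate with Schwarz--Pick and Lemma A, and verify sharpness on $f_c^*(z)=\int_0^z(1-\xi^2)^{-c/2}\,d\xi$. But as written there is a genuine gap: every step at which the constant $c(4-c)/2$ could actually emerge is deferred (``should take the form'', ``I expect the supremum \ldots to be'', ``will require either a careful term-by-term pairing or an auxiliary one-variable maximization lemma''). Two concrete pieces are missing. First, the closed form of the Schwarzian: working with $\phi$ (so that $f''/f'=c\phi/(1-z\phi)$ with $\phi(0)=0$) rather than with $\omega=z\phi$ avoids the artificial poles at the origin you are worried about and yields, in two lines,
\begin{align*}
S_f(z)=c\,\frac{\phi'(z)+\left(1-\tfrac{c}{2}\right)\phi^2(z)}{\left(1-z\phi(z)\right)^2},
\end{align*}
which is the identity the whole argument rests on; your version with $\omega''$, a question mark and ``lower-order terms'' never reaches a usable expression. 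Second, and decisively, the estimate you dismiss as bookkeeping is the actual content of the proof. After Schwarz--Pick gives $|\phi'(z)|\le(1-|\phi(z)|^2)/(1-|z|^2)$, one must control the factor $(1-|z|^2)^2/|1-z\phi(z)|^2$ by $(1-|z|^2)/(1-|\phi(z)|^2)$; the paper does this by introducing $\Psi(z)=(\bar z-\phi(z))/(1-z\phi(z))$ and using $1-|\Psi(z)|^2=(1-|\phi(z)|^2)(1-|z|^2)/|1-z\phi(z)|^2\le 1$, i.e.\ $|1-z\phi|^2\ge(1-|z|^2)(1-|\phi|^2)$. With that pairing the $\phi'$ term contributes exactly $c$, and Lemma A with $\phi(0)=0$ bounds the remaining term by $c\left(1-\tfrac{c}{2}\right)|z|^2\le c\left(1-\tfrac{c}{2}\right)$, giving $c\left(2-\tfrac{c}{2}\right)=c(4-c)/2$. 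Nothing in your outline supplies this step, and without it the ``several competing terms'' you anticipate do not resolve to the stated constant.

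Two further cautions. For $c\in(2,3]$ the coefficient $1-\tfrac{c}{2}$ is negative, so a blind triangle inequality produces $c\bigl(1+\bigl|1-\tfrac{c}{2}\bigr|\bigr)=c^2/2$, which exceeds $c(4-c)/2$; handling the sign is precisely the kind of difficulty your deferred ``maximization lemma'' would have to confront. And for sharpness, $S_{f_c^*}(z)=c\bigl(1+(1-\tfrac{c}{2})z^2\bigr)/(1-z^2)^2$, so $(1-r^2)^2|S_{f_c^*}(r)|=c\bigl|1+(1-\tfrac{c}{2})r^2\bigr|$ tends to $c(4-c)/2$ only as $r\to1^-$ along the real axis; the supremum is approached, not attained at an interior optimizing point as your sketch suggests.
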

	For the class of convex functions, we obtain the following immediate result from Theorem \ref{Th-2.4}, showing that the sharp bound of $||Sf||$ is $2$.
	\begin{cor}
		If $f\in \mathcal{C} := \mathcal{F}^0(2)$ be of the form \eqref{Eq-1.3} with $f^{\prime\prime}(0)=0$, then Schwarzian norm satisfies the inequality
		\begin{align*}
			||Sf||=\sup_{z\in\mathbb{D}}(1-|z|^2)^2|Sf(z)|\leq2.
		\end{align*}
		The estimate is sharp.
		
	\end{cor}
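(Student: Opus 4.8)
The asserted bound is precisely the case $c=2$ of Theorem~\ref{Th-2.4}, where $\tfrac{c(4-c)}{2}=2$; so the real work is the sharpness. I would nevertheless record a short self-contained derivation of the bound, because it exhibits the extremal function. Since $f\in\mathcal{C}=\mathcal{F}^0(2)$ means $1+zf''(z)/f'(z)\prec(1+z)/(1-z)$ together with $f''(0)=0$, the computation in the proof of Theorem~\ref{Th-2.1} (taken with $c=2$) shows that
\begin{align*}
\phi(z):=\frac{f''(z)/f'(z)}{zf''(z)/f'(z)+2}
\end{align*}
is a holomorphic self-map of $\mathbb{D}$ with $\phi(0)=f''(0)/2=0$, and that $f''(z)/f'(z)=2\phi(z)/(1-z\phi(z))$. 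Substituting this into $S_f=(f''/f')'-\tfrac12(f''/f')^2$ and simplifying — a routine differentiation in which the terms in $\phi^2$ cancel — I expect the clean identity
\begin{align*}
S_f(z)=\frac{2\,\phi'(z)}{\bigl(1-z\phi(z)\bigr)^2},\qquad z\in\mathbb{D}.
\end{align*}

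Next I would estimate this. Put $r=|z|$ and $\rho=|\phi(z)|$. The Schwarz lemma gives $\rho\le r$, the Schwarz--Pick inequality gives $|\phi'(z)|\le(1-\rho^2)/(1-r^2)$, and the triangle inequality gives $|1-z\phi(z)|\ge 1-r\rho$. Hence
\begin{align*}
(1-|z|^2)^2\,|S_f(z)|\le\frac{2(1-r^2)(1-\rho^2)}{(1-r\rho)^2}\le 2,
\end{align*}
the last step being the elementary identity $(1-r\rho)^2-(1-r^2)(1-\rho^2)=(r-\rho)^2\ge 0$. Taking the supremum over $z\in\mathbb{D}$ yields $||Sf||\le 2$.

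For sharpness, the equality cases (equality in Schwarz--Pick together with $\phi(0)=0$, and $\rho=r$) force $\phi(z)=z$, which corresponds — on integrating $f''/f'=2z/(1-z^2)$ with $f'(0)=1$ — to
\begin{align*}
f_0(z)=\int_0^z\frac{d\xi}{1-\xi^2}=\frac12\log\frac{1+z}{1-z}.
\end{align*}
One checks that $f_0''(0)=0$ and $1+zf_0''(z)/f_0'(z)=(1+z^2)/(1-z^2)$ has positive real part on $\mathbb{D}$, so $f_0\in\mathcal{C}=\mathcal{F}^0(2)$; the identity above gives $S_{f_0}(z)=2/(1-z^2)^2$, so $(1-t^2)^2|S_{f_0}(t)|=2$ for every real $t\in(-1,1)$, whence $||S_{f_0}||=2$. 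This shows the estimate cannot be improved. I do not foresee a genuine obstacle: once the identity $S_f=2\phi'/(1-z\phi)^2$ is in hand, the bound collapses to $(r-\rho)^2\ge0$ and the extremal is dictated by the equality analysis; the only mildly delicate point is verifying that the differentiation really does telescope down to that identity.
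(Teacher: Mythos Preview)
Your argument is correct. Both you and the paper ultimately obtain the corollary as the specialization $c=2$ of Theorem~\ref{Th-2.4}, and your extremal function $f_0(z)=\tfrac12\log\tfrac{1+z}{1-z}$ is precisely the paper's $f_c$ from Example~\ref{Example-2.1} at $c=2$.

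Where you diverge is in the self-contained derivation of the bound. The paper, working for general $c$, carries the term $(1-\tfrac{c}{2})\phi^2$ in the Schwarzian and controls it by introducing the auxiliary map $\Psi(z)=(\bar z-\phi(z))/(1-z\phi(z))$, rewriting $(1-|z|^2)^2/|1-z\phi(z)|^2$ in terms of $1-|\Psi|^2$, and then invoking Lemma~A (which is where the hypothesis $\phi(0)=0$ enters). You instead observe that at $c=2$ the $\phi^2$ term vanishes, leaving $S_f=2\phi'/(1-z\phi)^2$, after which Schwarz--Pick and the elementary identity $(1-r\rho)^2-(1-r^2)(1-\rho^2)=(r-\rho)^2$ finish the job. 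This is genuinely shorter and avoids both $\Psi$ and Lemma~A; a small bonus is that your bound never actually uses $\phi(0)=0$, so it recovers the classical fact $\|S_f\|\le 2$ for \emph{all} convex $f$, not only those with $f''(0)=0$. The price is that your route is specific to $c=2$: for $c\ne 2$ the $\phi^2$ term survives and the paper's $\Psi$-machinery (or something equivalent) is needed.
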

	\begin{proof}[\bf Proof of Theorem \ref{Th-2.4}]
		Let $f\in\mathcal{F}^0(c)$ be of the form \eqref{Eq-1.3}. Then from \eqref{Eq-2.3}, we have
		\begin{align*}
			\frac{f^{\prime\prime}(z)}{f^{\prime}(z)}=\frac{c\phi(z)}{(1-z\phi(z))}.
		\end{align*}
		A simple calculation gives that
		\begin{align*}
			Sf(z)=c\left(\frac{\phi^{\prime}(z)+\left(1-\frac{c}{2}\right)\phi^2(z)}{(1-z\phi(z))^2}\right).
		\end{align*}
		By using triangle inequality and Schwarz pick lemma, we obtain
		\begin{align}\label{Eq-2.6}
			(1-|z|^2)^2|Sf|&\leq c\bigg|\phi^{\prime}(z)+\left(1-\frac{c}{2}\right)\phi^2(z)\bigg|\frac{(1-|z|^2)^2}{|1-z\phi(z)|^2}\\&\nonumber=\frac{c(1-|z|^2)^2}{|1-z\phi(z)|^2}\left(\frac{1-|\phi(z)|^2}{1-|z|^2}+\left(1-\frac{c}{2}\right)|\phi(z)|^2\right).
		\end{align}
		We define the function $\Psi(z):\mathbb{D}\rightarrow\mathbb{D}$ such that
		\begin{align*}
			\Psi(z):=\frac{\bar{z}-\phi(z)}{1-z\phi(z)}.
		\end{align*}
		Since $\phi(\mathbb{D})\subseteq\mathbb{D}$ then $(1-|z|^2)(1-|z\phi(z)|^2)>0$, it follows that 
		\begin{align*}
			|\bar{z}-\phi(z)|^2<|1-z\phi(z)|^2.
		\end{align*}
		Thus, we conclude  that $|\Psi(z)|^2<1$. It is easy to see that
		\begin{align*}
			1-|\Psi(z)|^2=\frac{(1-|\phi(z)|^2)(1-|z|^2)}{|1-z\phi(z)|^2}
		\end{align*}
		and
		\begin{align}\label{Eq-2.7}
			\frac{(1-|z|^2)^2}{|1-z\phi(z)|^2}=\frac{(1-|\Psi(z)|^2)(1-|z|^2)}{(1-|\phi(z)|^2)}.
		\end{align}
		If we replace the expression \eqref{Eq-2.7} in \eqref{Eq-2.6}, then we have
		\begin{align}\label{Eq-2.8}
			(1-|z|^2)^2|Sf(z)|\leq c(1-|\Psi_1(z)|^2)\left(1+\left(1-\frac{c}{2}\right)\frac{|\phi(z)|^2(1-|z|^2)}{(1-|\phi(z)|^2)}\right).
		\end{align}
		Since $f^{\prime\prime}(0)=0$ implies that $\phi(0)=0$, using Lemma A, we obtain
		\begin{align}\label{Eq-2.9}
			\frac{|\phi(z)|^2}{1-|\phi(z)|^2}\leq\frac{|z|^2}{1-|z|^2}.
		\end{align}
		Using \eqref{Eq-2.9} in \eqref{Eq-2.8}, we obtain
		\begin{align*}
			(1-|z|^2)^2|Sf(z)|\leq c(1-|\Psi(z)|^2)\left(1+\left(1-\frac{c}{2}\right)|z|^2\right).
		\end{align*}
		Again, considering that $1-|\Psi(z)|^2\leq 1$, one can readily observe that
		\begin{align*}
			\sup_{z\in\mathbb{D}}(1-|z|^2)^2|Sh(z)|\leq c\left(1+\left(1-\frac{c}{2}\right)\right)=\frac{c(4-c)}{2}.\nonumber
		\end{align*}
		Thus the desired inequality is obtained. The sharpness of the inequality follows from the Example \ref{Example-2.1}.
	\end{proof}

\begin{exm}\label{Example-2.1}
	The family of parameterized functions defined as:
	\begin{align*}
		f_c (z)=\int_{0}^{z}\frac{1}{(1-\xi^2)^{c/2}} d\xi,\;\;\;\;\mbox{for}\;c\in(0,3]
	\end{align*}
	maximizes the Schwarzian norm defined as:
	\begin{align*}
		||Sf_{c}||=\sup_{z\in\mathbb{D}}(1-|z|^2)^2|Sf_{c}| 
	\end{align*}
	and from this, the sharpness of the inequality holds for $c>0$.\vspace{2mm} 
	
	\noindent Note that
	\begin{align*}
		\frac{f^{\prime\prime}_c}{f_c^{\prime}}(z)=\frac{c z}{1-z^2}\;\;\mbox{and}\;\;Sf_c=\frac{c}{(1-z^2)^2}\left[1+\left(1-\frac{c}{2}\right)|z|^2\right]
	\end{align*}
	which shows that
	\begin{align*}
		||Sf_c||&=\sup_{z\in\mathbb{D}}(1-|z|^2)^2|Sf_c|\\&=c\left(1+\left(1-\frac{c}{2}\right)\right)\\&=\frac{c(4-c)}{2
		}.
	\end{align*}
\end{exm}
Given that the value $|f^{\prime\prime}(0)|$ is not necessarily zero,  we provide a bound for the quantity $(1-|z|^2)^2|Sf(z)|$ where  $f\in\mathcal{F}(c)$.
\begin{thm}\label{Th-2.5}
	If $f\in\mathcal{F}(c)$, for all $z\in\mathbb{D}$ with $c\in(0,3]$ and 
	\begin{align*}
		\gamma=|\phi(0)|=\frac{|f^{\prime\prime}(0)|}{c},
	\end{align*}
	then 
	\begin{align*}
		(1-|z|^2)^2|Sf(z)|\leq c\left(1+\left(1-\frac{c}{2}\right)\frac{1+\gamma}{1-\gamma}\right).
	\end{align*}
\end{thm}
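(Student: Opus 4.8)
The plan is to retrace the proof of Theorem~\ref{Th-2.4} essentially verbatim, modifying only the single step that used $f''(0)=0$. First I would record that, by \eqref{Eq-2.3}, every $f\in\mathcal{F}(c)$ can be written $f''(z)/f'(z)=c\,\phi(z)/(1-z\phi(z))$ with $\phi:\mathbb{D}\to\mathbb{D}$ analytic, and that evaluating \eqref{Eq-2.3} at $z=0$ gives $\phi(0)=f''(0)/c$, hence $|\phi(0)|=\gamma$. The identity $Sf(z)=c\bigl(\phi'(z)+(1-\tfrac{c}{2})\phi^{2}(z)\bigr)/(1-z\phi(z))^{2}$, the introduction of the auxiliary function $\Psi(z)=(\bar z-\phi(z))/(1-z\phi(z))$ with $|\Psi(z)|<1$, and the identity \eqref{Eq-2.7} nowhere use $\phi(0)=0$, so one reaches \eqref{Eq-2.8} exactly as before:
\[
(1-|z|^{2})^{2}|Sf(z)|\le c\,(1-|\Psi(z)|^{2})\left(1+\Bigl(1-\frac{c}{2}\Bigr)\frac{|\phi(z)|^{2}(1-|z|^{2})}{1-|\phi(z)|^{2}}\right).
\]

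The step where Theorem~\ref{Th-2.4} invoked $\phi(0)=0$ (via \eqref{Eq-2.9}) is then replaced by Lemma~A with the general base point $\phi(0)$: since $\phi$ is a self-map of $\mathbb{D}$ with $|\phi(0)|=\gamma$, one has $|\phi(z)|\le(\gamma+|z|)/(1+\gamma|z|)$, whence $1-|\phi(z)|^{2}\ge(1-\gamma^{2})(1-|z|^{2})/(1+\gamma|z|)^{2}$ and therefore
\[
\frac{|\phi(z)|^{2}(1-|z|^{2})}{1-|\phi(z)|^{2}}\le\frac{(\gamma+|z|)^{2}}{1-\gamma^{2}}<\frac{(1+\gamma)^{2}}{1-\gamma^{2}}=\frac{1+\gamma}{1-\gamma},
\]
the strict inequality because $|z|<1$. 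Substituting this into \eqref{Eq-2.8} and using $1-|\Psi(z)|^{2}\le1$ gives
\[
(1-|z|^{2})^{2}|Sf(z)|\le c\left(1+\Bigl(1-\frac{c}{2}\Bigr)\frac{1+\gamma}{1-\gamma}\right),
\]
which is the asserted inequality; for $\gamma=0$ it collapses to Theorem~\ref{Th-2.4}.

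The only genuinely new input beyond Theorem~\ref{Th-2.4} is the off-centre Schwarz--Pick bound (Lemma~A with $\phi(0)\ne0$); the rest is bookkeeping already performed in the proof of Theorem~\ref{Th-2.4}. The point I would treat with care is the sign of $1-\tfrac{c}{2}$: for $0<c\le2$ (in particular for the convex case $c=2$) this coefficient is nonnegative, so the monotone substitution of $\frac{|\phi(z)|^{2}(1-|z|^{2})}{1-|\phi(z)|^{2}}<\frac{1+\gamma}{1-\gamma}$ into \eqref{Eq-2.8} is immediate; for $2<c\le3$ the term carrying this coefficient is nonpositive, so it has to be estimated from the opposite side, and I expect this sign bookkeeping to be the only delicate point in writing the argument out in full.
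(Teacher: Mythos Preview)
Your proposal is correct and follows the paper's own proof essentially line for line: apply Lemma~A with $|\phi(0)|=\gamma$ to obtain $\dfrac{|\phi(z)|^{2}(1-|z|^{2})}{1-|\phi(z)|^{2}}\le\dfrac{(\gamma+|z|)^{2}}{1-\gamma^{2}}$, substitute into \eqref{Eq-2.8}, and finish with $1-|\Psi(z)|^{2}\le 1$ and $|z|<1$. Your flag about the sign of $1-\tfrac{c}{2}$ when $2<c\le 3$ is well taken---the paper's proof performs the monotone substitution without comment, so on that point you have actually been more careful than the original.
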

We have the following immediate result from Theorem \ref{Th-2.5} for the class $\mathcal{C}$ of convex functions.
\begin{cor}
	If $f\in \mathcal{C} := \mathcal{F}(2)$ be of the form \eqref{Eq-1.3}, then the inequality for $c=2$, with
	\begin{align*}
		\gamma=|\phi(0)|=\frac{|f^{\prime\prime}(0)|}{2}
	\end{align*}
the following inequality holds:
	\begin{align*}
		(1-|z|^2)^2|Sf(z)|\leq 2.
	\end{align*}
\end{cor}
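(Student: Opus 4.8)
The plan is to obtain this as the direct $c=2$ specialization of Theorem \ref{Th-2.5}. Since $f\in\mathcal{C}=\mathcal{F}(2)$ is precisely the statement $f\in\mathcal{F}(c)$ with $c=2$, the hypotheses of Theorem \ref{Th-2.5} are met. First I would invoke that theorem verbatim with $c=2$, so that with $\gamma=|\phi(0)|=|f^{\prime\prime}(0)|/c=|f^{\prime\prime}(0)|/2$ we immediately have
\[
(1-|z|^2)^2|Sf(z)|\leq c\left(1+\left(1-\frac{c}{2}\right)\frac{1+\gamma}{1-\gamma}\right).
\]

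The key observation is that the coefficient $1-\tfrac{c}{2}$ multiplying the $\gamma$-dependent factor $\tfrac{1+\gamma}{1-\gamma}$ vanishes exactly at $c=2$. Substituting $c=2$ gives $1-\tfrac{c}{2}=0$, so the entire term $\left(1-\tfrac{c}{2}\right)\tfrac{1+\gamma}{1-\gamma}$ drops out regardless of the value of $\gamma$, and the right-hand side collapses to $2\bigl(1+0\bigr)=2$. This yields the claimed uniform bound $(1-|z|^2)^2|Sf(z)|\leq 2$ for every $f\in\mathcal{C}$.

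The point worth stressing is that no control on $f^{\prime\prime}(0)$ is needed in the convex case. For general $c\neq 2$ the bound of Theorem \ref{Th-2.5} blows up as $\gamma\to 1$, i.e. as $|f^{\prime\prime}(0)|\to c$, whereas at $c=2$ the annihilating factor $1-\tfrac{c}{2}$ removes this dependence entirely. There is essentially no obstacle here: the result is an elementary algebraic specialization, and the only verification required is the trivial identity $1-\tfrac{c}{2}=0$ at $c=2$.
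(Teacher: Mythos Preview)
Your argument is correct and matches the paper's approach: the corollary is stated there as an ``immediate result from Theorem \ref{Th-2.5},'' and the specialization $c=2$ kills the factor $1-\tfrac{c}{2}$ exactly as you describe. There is nothing to add.
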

\begin{proof}[\bf Proof of Theorem \ref{Th-2.5}]
	Let $\gamma=|\phi(0)|$. Applying the Lemma A, we  calculate
	\begin{align*}
		\frac{|\phi(z)|^2}{1-|\phi(z)|^2}\leq\frac{(\gamma+|z|)^2}{(1-\gamma^2)(1-|z|^2)}.
	\end{align*}
	If we substitute the above inequality into \eqref{Eq-2.6}, we get
	\begin{align*}
		(1-|z|^2)^2|Sf(z)|\leq c(1-|\Phi_1(z)|^2)\left(1+\left(1-\frac{c}{2}\right)\frac{(\gamma+|z|)^2}{(1-\gamma^2)}\right).
	\end{align*}
	From the fact that $|z|<1$ and  $1-|\Phi_1(z)|^2\leq1$, we can easily calculate 
	\begin{align*}
		(1-|z|^2)^2|Sf(z)|\leq c\left(1+\left(1-\frac{c}{2}\right)\frac{1+\gamma}{1-\gamma}\right)=2.
	\end{align*}
	This is the desired bound.
\end{proof}

\section{\bf Declarations}
\noindent\textbf{Conflict of interest:} The authors declare that there is no conflict  of interest regarding the publication of this paper.\vspace{2mm}

\noindent\textbf{Data availability statement:}  Data sharing not applicable to this article as no datasets were generated or analysed during the current study.\vspace{2mm}

\noindent {\bf Funding:} Not Applicable.\vspace{2mm}

\end{document}